\title[The jumping phenomenon]
{The jumping phenomenon of the dimensions of Bott-Chern cohomology groups and Aeppli cohomology groups}
\author[Jiezhu Lin]{Jiezhu Lin}
\author[Xuanming Ye]{Xuanming Ye}
\address{Department of Mathematics, Guangzhou University, No 230, Waihuan Road West, Guangzhou, 510006, P.R.China}
\address{Department of Mathematics, Sun Yat-Sen University, Guangzhou 510275, P.R.China}
\email{jlin@gzhu.edu.cn}
\email{yexm3@mail.sysu.edu.cn}
\begin{document}
\theoremstyle{plain}
\newtheorem{thm}{Theorem}[section]
\newtheorem{theorem}[thm]{Theorem}
\newtheorem{lemma}[thm]{Lemma}
\newtheorem{corollary}[thm]{Corollary}
\newtheorem{proposition}[thm]{Proposition}
\newtheorem{addendum}[thm]{Addendum}
\newtheorem{variant}[thm]{Variant}
\theoremstyle{definition}
\newtheorem{construction}[thm]{Construction}
\newtheorem{notations}[thm]{Notations}
\newtheorem{question}[thm]{Question}
\newtheorem{problem}[thm]{Problem}
\newtheorem{remark}[thm]{Remark}
\newtheorem{remarks}[thm]{Remarks}
\newtheorem{definition}[thm]{Definition}
\newtheorem{claim}[thm]{Claim}
\newtheorem{assumption}[thm]{Assumption}
\newtheorem{assumptions}[thm]{Assumptions}
\newtheorem{properties}[thm]{Properties}
\newtheorem{example}[thm]{Example}
\newtheorem{conjecture}[thm]{Conjecture}
\numberwithin{equation}{thm}

\newcommand{\sA}{{\mathcal A}}
\newcommand{\sB}{{\mathcal B}}
\newcommand{\sC}{{\mathcal C}}
\newcommand{\sD}{{\mathcal D}}
\newcommand{\sE}{{\mathcal E}}
\newcommand{\sF}{{\mathcal F}}
\newcommand{\sG}{{\mathcal G}}
\newcommand{\sH}{{\mathcal H}}
\newcommand{\sI}{{\mathcal I}}
\newcommand{\sJ}{{\mathcal J}}
\newcommand{\sK}{{\mathcal K}}
\newcommand{\sL}{{\mathcal L}}
\newcommand{\sM}{{\mathcal M}}
\newcommand{\sN}{{\mathcal N}}
\newcommand{\sO}{{\mathcal O}}
\newcommand{\sP}{{\mathcal P}}
\newcommand{\sQ}{{\mathcal Q}}
\newcommand{\sR}{{\mathcal R}}
\newcommand{\sS}{{\mathcal S}}
\newcommand{\sT}{{\mathcal T}}
\newcommand{\sU}{{\mathcal U}}
\newcommand{\sV}{{\mathcal V}}
\newcommand{\sW}{{\mathcal W}}
\newcommand{\sX}{{\mathcal X}}
\newcommand{\sY}{{\mathcal Y}}
\newcommand{\sZ}{{\mathcal Z}}
\newcommand{\A}{{\mathbb A}}
\newcommand{\B}{{\mathbb B}}
\newcommand{\C}{{\mathbb C}}
\newcommand{\D}{{\mathbb D}}
\newcommand{\E}{{\mathbb E}}
\newcommand{\F}{{\mathbb F}}
\newcommand{\G}{{\mathbb G}}
\newcommand{\HH}{{\mathbb H}}
\newcommand{\I}{{\mathbb I}}
\newcommand{\J}{{\mathbb J}}
\renewcommand{\L}{{\mathbb L}}
\newcommand{\M}{{\mathbb M}}
\newcommand{\N}{{\mathbb N}}
\renewcommand{\P}{{\mathbb P}}
\newcommand{\Q}{{\mathbb Q}}
\newcommand{\R}{{\mathbb R}}
\newcommand{\SSS}{{\mathbb S}}
\newcommand{\T}{{\mathbb T}}
\newcommand{\U}{{\mathbb U}}
\newcommand{\V}{{\mathbb V}}
\newcommand{\W}{{\mathbb W}}
\newcommand{\X}{{\mathbb X}}
\newcommand{\Y}{{\mathbb Y}}
\newcommand{\Z}{{\mathbb Z}}
\newcommand{\id}{{\rm id}}
\newcommand{\rank}{{\rm rank}}
\newcommand{\END}{{\mathbb E}{\rm nd}}
\newcommand{\End}{{\rm End}}
\newcommand{\Eis}{{\rm Eis}}
\newcommand{\Hg}{{\rm Hg}}
\newcommand{\tr}{{\rm tr}}
\newcommand{\Sl}{{\rm Sl}}
\newcommand{\Gl}{{\rm Gl}}
\newcommand{\Gr}{{\rm Gr}}
\newcommand{\Cor}{{\rm Cor}}
\newcommand{\dcechg}{\delta\!\!\!\check{\delta}}
\newcommand{\del}{\partial}
\newcommand{\delbar}{\overline{\del}}
\newcommand{\tempo}{\mathbf{t}}
\newcommand\SO{{\rm SO}}
\newcommand\SU{{\rm SU}}
\newcommand\fra{{\mathfrak a}} 
\newcommand\frg{{\mathfrak g}}
\newcommand\frh{{\mathfrak h}}
\newcommand\frk{{\mathfrak K}}
\newcommand\frn{{\mathfrak n}}
\newcommand\frs{{\mathfrak s}}
\newcommand\frt{{\mathfrak t}}
\newcommand\gc{\frg_\mathbb{C}}
\newcommand\Real{{\mathfrak R}{\frak e}\,} 
\newcommand\Imag{{\mathfrak I}{\frak m}\,}
\newcommand\nilm{\Gamma\backslash G}
\newcommand\Mod{{\cal M}}
\newcommand\db{{\bar{\partial}}}
\newcommand\zzz{{\!\!\!}}
\newcommand{\la}{\langle}
\newcommand{\sla}{\slash\!\!\!}
\newcommand{\opiccolo}[1]{\mathrm{o}\left(\left|#1\right|\right)}
\newcommand{\ra}{\rangle}
\newcommand\sqi{{\sqrt{-1\,}}}

\maketitle

\footnotetext[1]{This work was supported by National Natural Science Foundation of China (Grant No. 11201491 and 11201090), Doctoral Fund of Ministry of Education of China
 (Grant No. 20124410120001 and 20120171120009 ) and the Foundation of Research Funds for Young Teachers Training Project
 (Grant No. 34000-3161248).}

\begin{abstract}\ \rm Let $X$ be a compact complex
manifold, and let  $\pi: \mathcal{X} \rightarrow B$ be a small
deformation of $X$, the dimensions of the Bott-Chern cohomology
groups $H_{\rm BC}^{p,q}(X(t))$ and Aeppli cohomology groups $H_{\rm A}^{p,q}(X(t))$ may vary under this deformation.
In this paper, we will study the deformation obstructions of a
$(p,q)$ class in the central fiber $X$. In particular, we obtain
an explicit formula for the obstructions and apply this formula to
the study of small deformations of the Iwasawa manifold.
\\
\end{abstract}

\section{Introduction}
\renewcommand{\theequation}
{1.\arabic{equation}} \setcounter{equation}{-1}
Let $X$ be a compact complex manifold and
$\pi:\mathcal{X}\rightarrow B$ be a family of complex manifolds
such that $\pi^{-1}(0)=X$. Let $X_{t}=\pi^{-1}(t)$ denote the
fibre of $\pi$ over the point $t\in B$. In \cite{ye}, the author has studied the jumping phenomenon of
hodge numbers $h^{p,q}$ of $X$ by studying the deformation obstructions of a
$(p,q)$ class in the central fiber $X$. In particular, the author obtained
an explicit formula for the obstructions and apply this formula to
the study of small deformations of the Iwasawa manifold. Besides the Hodge numbers, the dimensions of
Bott-Chern cohomology groups and the dimensions of
Aeppli cohomology groups are also important invariant of complex structures. In \cite{angella-1}, D. Angella
has studied the small deformations of the Iwasawa manifold and found that  the dimensions of
Bott-Chern cohomology groups and the dimensions of
Aeppli cohomology groups are not deformation invariants.

 In this paper, we
will study the Bott-Chern cohomology and Aeppli cohomology by studying
the hypercohomology of a complex $\sB_{p,q}^{\bullet}$ constructed in \cite{Schw}. In \cite{Schw}, M. Schweitzer proved that $$H^{p,q}_{\rm BC}(X)\cong\HH^{p+q}(X,\sB_{p,q}^{\bullet}),$$
and
$$H^{p,q}_{\rm A}(X)\cong\HH^{p+q+1}(X,\sB_{p+1,q+1}^{\bullet}).$$
As the author did in \cite{ye}, we will such study the jumping phenomenons from the viewpoint of obstruction
theory. More precisely, for a certain small deformation
$\mathcal{X}$ of $X$ parameterized by a basis $B$ and a certain
class $[\theta]$ of the hypercohomology group
$\HH^{l}(X,\sB_{p,q}^{\bullet})$, we will try to find out the obstruction
to extend it to an element of the relative hypercohomology group
group $\HH^{l}(\sX,\sB_{p,q;\sX/B}^{\bullet})$. We will
call those elements which have non trivial obstruction the
obstructed elements. And then we will see that these elements will
play an important role when we study the jumping phenomenon. Because we will see that the existence of the
obstructed elements is a sufficient condition for
the variation of the dimensions of Bott-Chern cohomology and Aeppli cohomology.

In $\S2$ we will summarize the results of
M. Schweitzer about Bott-Chern cohomology and Aeppli cohomology, from which we can define
the relative Bott-Chern cohomology and Aeppli cohomology on $X_n$ where $X_n$ is the $n$th order
deformation of $\pi:\mathcal{X}\rightarrow B$. We will also introduce some important maps which will
be used in the calculation of the obstructions in $\S4$. In $\S3$,
we will try to explain why we
need to consider the obstructed elements. The relation between the
jumping phenomenon of Bott-Chern cohomology and Aeppli cohomology  and the obstructed elements is
the following.
\begin{theorem}
 \label{main011}
Let $\pi: \mathcal{X}\to B$ be a small deformation of the central
fibre compact complex manifold $X$. Now we consider $\dim \HH^l(X(t),\sB_{p,q;t}^{\bullet})$  as a function
 of $t\in B$. It jumps at $t=0$ if there exists an element
 $[\theta]$ either in $\HH^l(X,\sB_{p,q}^{\bullet})$  or in $\HH^{l-1}(X,
\sB_{p,q}^{\bullet}) $ and a minimal natural number $n\geq 1$ such that
the n-th order obstruction
 $$ o_{n}([\theta])\neq 0.$$

\end{theorem}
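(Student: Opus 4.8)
The plan is to combine the upper semicontinuity of the fibre dimension with the obstruction calculus coming from the filtration of the relative complex by order of deformation. First I would record that $t \mapsto \dim_{\C} \HH^l(X(t), \sB_{p,q;t}^{\bullet})$ is upper semicontinuous on $B$: the relative complex $\sB_{p,q;\sX/B}^{\bullet}$ is a bounded complex of sheaves on $\sX$, flat over $B$ away from the degeneracy locus of the construction, so the general semicontinuity theorem for hypercohomology in families applies. Consequently ``jumping at $t=0$'' is equivalent to the strict inequality $\dim \HH^l(X(0), \sB_{p,q}^{\bullet}) > \dim \HH^l(X(t), \sB_{p,q;t}^{\bullet})$ for generic $t$ near $0$, and the entire burden of the proof is to manufacture this strict drop out of the hypothesis $o_n([\theta]) \neq 0$.

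Next I would set up the order-by-order machinery. Writing $X_m$ for the $m$-th order deformation, the short exact sequences $0 \to \sB_{p,q}^{\bullet} \to \sB_{p,q;X_m}^{\bullet} \to \sB_{p,q;X_{m-1}}^{\bullet} \to 0$, obtained by tensoring Schweitzer's complex with the thickening sequence of $\sO_{X_m}$ over $\sO_{X_{m-1}}$, produce long exact hypercohomology sequences whose connecting homomorphisms $\partial_m$ encode the obstructions defined in $\S2$ and $\S4$. By construction $o_m([\theta])$ is computed as the image under $\partial_m$ of the order-$(m-1)$ extension of $[\theta]$. The minimality of $n$ guarantees that $[\theta]$ genuinely extends to an order-$(n-1)$ class, so that $o_n([\theta]) = \partial_n(\text{extension of }[\theta])$ is a well-defined, honest element of the central-fibre group rather than an empty symbol.

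I would then treat the two alternatives for the location of $[\theta]$, observing that in each case a single nonvanishing connecting map forces a rank deficiency among the maps relating consecutive orders, and that a nonvanishing $\partial_n$ always lowers $\dim \HH^l(X_n, \sB_{p,q;X_n}^{\bullet})$ below the value it would take if the long exact sequence split. If $[\theta] \in \HH^l(X, \sB_{p,q}^{\bullet})$, then in the portion $\HH^l(X_n, \sB_{p,q;X_n}^{\bullet}) \xrightarrow{\,r\,} \HH^l(X_{n-1}, \sB_{p,q;X_{n-1}}^{\bullet}) \xrightarrow{\,\partial_n\,} \HH^{l+1}(X, \sB_{p,q}^{\bullet})$ the relation $o_n([\theta]) \neq 0$ places the extension class outside $\mathrm{im}(r)$, so $r$ fails to be surjective and $\dim \HH^l(X_n, \sB_{p,q;X_n}^{\bullet})$ drops accordingly. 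If instead $[\theta] \in \HH^{l-1}(X, \sB_{p,q}^{\bullet})$, the symmetric portion $\HH^{l-1}(X_{n-1}, \sB_{p,q;X_{n-1}}^{\bullet}) \xrightarrow{\,\partial_n\,} \HH^l(X, \sB_{p,q}^{\bullet}) \xrightarrow{\,i\,} \HH^l(X_n, \sB_{p,q;X_n}^{\bullet})$ shows that $o_n([\theta])$ lies in $\ker(i)$ by exactness, so $i$ acquires a nontrivial kernel and $\dim \mathrm{im}(i)$ is strictly smaller than $\dim \HH^l(X, \sB_{p,q}^{\bullet})$, again cutting down the order-$n$ dimension.

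Finally I would propagate this infinitesimal drop to genuine nearby fibres. The generic value $\dim \HH^l(X(t), \sB_{p,q;t}^{\bullet})$ is bounded above by the generic rank of the coherent $\sO_B$-module $\HH^l(\sX, \sB_{p,q;\sX/B}^{\bullet})$, which in turn counts only those central-fibre data that extend to every finite order; the value at $t=0$ instead reflects all of $\HH^l(X, \sB_{p,q}^{\bullet})$. Data obstructed at the finite order $n$ cannot extend to a formal, hence to an actual, section over a neighbourhood of $0$, so the rank deficiency detected above persists to the generic fibre and yields the asserted jump. The hard part is precisely this last passage: one must verify that a nonzero obstruction at the minimal finite order $n$ is not a phantom that gets repaired at higher order, but genuinely depresses the generic $\sO_B$-module rank. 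This amounts to identifying $o_n([\theta])$ with honest torsion in the relative hypercohomology module and then invoking base change to convert that torsion into the semicontinuous jump at $t=0$.
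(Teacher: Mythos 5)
Your overall skeleton---semicontinuity, the order-by-order short exact sequences whose connecting homomorphisms realize the $o_n$, and the case split according to whether $[\theta]$ sits in degree $l$ or $l-1$---matches the paper's. But the step you yourself flag as ``the hard part,'' namely passing from a nonzero obstruction at a finite order to an actual drop of $\dim\HH^l(X(t),\sB^{\bullet}_{p,q;t})$ at nearby points $t\neq 0$, is a genuine gap, and the tool you propose for it does not apply. The relative complex $\sB^{\bullet}_{p,q;\sX/B}$ is not a complex of coherent analytic sheaves: it mixes $\sO$ with $\bar{\sO}$, and the terms $\Omega^{r;\omega}_{X_n/B_n}$ carry real-analytic coefficients in the base direction, so neither Grauert-type semicontinuity for direct images nor the torsion/base-change dictionary you invoke is available off the shelf. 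The paper obtains both semicontinuity and the key characterization of non-jumping from Schweitzer's Hodge theory instead: $H^{p,q}_{\rm BC}$ and $H^{p,q}_{\rm A}$ are the kernels of the fourth-order elliptic self-adjoint operators $\tilde\Delta_{{\rm BC},t}$ and $\tilde\Delta_{{\rm A},t}$, which vary real-analytically in $t$, so the dimension fails to jump at $0$ if and only if every class on $X$ extends to a real-analytic family of relative closed forms with nonvanishing class near $0$.

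With that equivalence in hand the two cases close differently from what you wrote. In the first case ($[\theta]\in\HH^l$) the nonvanishing of $o_n([\theta])$ directly forbids such an extension, and you are done. In the second case ($[\theta]\in\HH^{l-1}$) your conclusion that the inclusion $i$ acquires a nontrivial kernel only concerns the module $\HH^l(X_n,\sB^{\bullet}_{p,q;X_n/B_n})$ over an Artinian ring; it says nothing yet about the actual fibres $X(t)$. The paper's mechanism here is that $\dcechg\theta_{n-1}/x^n$ is a real-analytic family of closed data whose class is nonzero on the central fibre but trivial on every nearby fibre; if the dimension did not jump, the harmonic projector would vary continuously in $t$, forcing the central class to vanish as well---a contradiction. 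You need some version of this continuity-of-the-harmonic-projection argument (or else an actual proof of coherence and base change for these mixed holomorphic/anti-holomorphic complexes) to convert your finite-order rank deficiency into the asserted jump.
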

In $\S4$ we will get a formula for the obstruction to the
extension we mentioned above.
\begin{theorem} \label{main12}
Let $\pi:\mathcal{X}\rightarrow B$ be a deformation of
$\pi^{-1}(0)=X$, where $X$ is a compact complex manifold. Let
$\pi_{n}:X_{n}\rightarrow B_{n}$ be the $n$th order deformation of
$X$. For arbitrary $[\theta]$ belongs to $\HH^l(X,\sB_{p,q}^{\bullet})$,
suppose we can extend $[\theta]$ to order $n-1$ in
$\HH^l(X_{n-1},\sB_{p,q;X_{n-1}/B_{n-1}}^{\bullet})$. Denote such element by
$[\theta_{n-1}]$. The obstruction of the extension of $[\theta]$
to $n$th order is given by:
$$ o_{n}([\theta])=-\partial^{\bar{\partial},\sB}_{X_{n-1}/B_{n-1}} \circ \kappa_{n} \llcorner \circ \partial^{\sB,\bar{\partial}}_{X_{n-1}/B_{n-1}}([\theta_{n-1}])- \bar{\partial}^{{\partial},\sB}_{X_{n-1}/B_{n-1}} \circ \bar{\kappa}_{n} \llcorner \circ \bar{\partial}^{\sB,{\partial}}_{X_{n-1}/B_{n-1}}([\theta_{n-1}]),$$
where $\kappa_{n}$ is the $n$th order Kodaira-Spencer class and $\bar{\kappa}_{n}$  is
the $n$th order Kodaira-Spencer class of the deformation $\bar{\pi}:\bar{\sX} \rightarrow \bar{B}$.
$\partial^{\bar{\partial},\sB}_{X_{n-1}/B_{n-1}}$, $\bar{\partial}^{{\partial},\sB}_{X_{n-1}/B_{n-1}}$ , $\partial^{\sB,\bar{\partial}}_{X_{n-1}/B_{n-1}}$ and $\bar{\partial}^{\sB,{\partial}}_{X_{n-1}/B_{n-1}}$
are the  maps defined in $\S2£¤$.
\end{theorem}
In $\S5$ we will use this formula to study carefully
the example given by Iku Nakamura and D. Angella, i.e. the small deformation of
the
Iwasama manifold and discuss some phenomenons.

\section{The Relative Bott-Chern Cohomology and Aeppli Cohomology of $X_n$ and the Representation of their Cohomology Classes}\

\subsection{The Bott-Chern(Aeppli) Cohomology and Bott-Chern(Aeppli) Hypercohomology } \ \ \\
All the details of this subsection can be found in \cite{Schw}.
Let $X$ be a compact complex manifold. The Dolbeault cohomology groups $H^{p,q}_{\db}(X)$, and more generally the terms $E_r^{p,q}(X)$ in the
Fr\"olicher spectral sequence~\cite{Fro}, are well-known finite dimensional invariants of the complex manifold $X$.
On the other hand, the
Bott-Chern and Aeppli cohomologies define additional complex invariants of $X$ given, respectively, by~\cite{Ae,BC}
$$
H^{p,q}_{\rm BC}(X)=
{\ker\{d\colon \sA^{p,q}(X)\longrightarrow \sA^{p+q+1}(X) \}
\over
{\rm im}\,\{\partial\db\colon \sA^{p-1,q-1}(X)\longrightarrow \sA^{p,q}(X) \}},
$$
and
$$
H^{p,q}_{\rm A}(X)=
{\ker\{\partial\db\colon \sA^{p,q}(X)\longrightarrow \sA^{p+1,q+1}(X) \}
\over
{\rm im}\,\{\partial\colon \sA^{p-1,q}(X)\longrightarrow \sA^{p,q}(X) \}
+
{\rm im}\,\{\db\colon \sA^{p,q-1}(X)\longrightarrow \sA^{p,q}(X) \}}.
$$
By the Hodge theory developed in \cite{Schw}, all these complex invariants are also finite dimensional and one has the isomorphisms
$H^{p,q}_{\mathrm{A}}(X)\cong H^{n-q,n-p}_{\mathrm{\rm BC}}(X)$.
Notice that $H^{q,p}_{\mathrm{\rm BC}}(X) \cong H^{p,q}_{\mathrm{\rm BC}}(X)$ by complex conjugation.
For any $r\geq 1$ and for any $p,q$, there are natural maps
$$
H^{p,q}_{\mathrm{\rm BC}}(X) \longrightarrow E_r^{p,q}(X)
\quad\quad\quad
{\mbox{\rm and }}
\quad\quad\quad
E_r^{p,q}(X) \longrightarrow H^{p,q}_{\mathrm{A}}(X).
$$
Recall that $E_1^{p,q}(X)\cong H^{p,q}_{\db}(X)$ and that the terms for $r=\infty$ provide a decomposition of the de Rham cohomology of the manifold, i.e.
$H^k_{\rm dR}(X,\mathbb{C})\cong \oplus_{p+q=k} E_{\infty}^{p,q}(X)$.
From now on we shall denote by $h^{p,q}_{\mathrm{\rm BC}}(X)$
the dimension of the cohomology group $H^{p,q}_{\mathrm{\rm BC}}(X)$.
The Hodge numbers will be denoted simply by $h^{p,q}(X)$ and the Betti numbers by $b_{k}(X)$.
For any given $p\geq 1,q\geq 1$, we define the complex of sheaves $\sL^{\bullet}_{p,q}$  by
$$\sL^k_{p,q}=\bigoplus_{\substack{r+s=k \\ r<p,s<q}}\sA^{r,s}\quad \mbox{if }k\leq p+q-2,$$
$$\sL^{k-1}_{p-1,q-1}=\bigoplus_{\substack{r+s=k \\ r\geq p,s\geq q}}\sA^{r,s}\quad \mbox{if }k\geq p+q,$$
and the differential:
$$\sL_{p,q}^0 \stackrel{pr_{\sL^1_{p,q}}\circ d}{\to} \sL^1_{p,q}\stackrel{pr_{\sL^2_{p,q}}\circ d}{\to} \ldots \to\sL^{k-2}_{p,q}\stackrel{\partial \bar{\partial}}{\to}\sL^{k-1}_{p,q}\stackrel{d}{\to}\sL^k_{p,q}\stackrel{d}{\to}\ldots$$
Then by the above construction, we have the following isomorphisms
$$H^{p,q}_{\rm BC}(X)=H^{p+q-1}(\sL_{p,q}^{\bullet}(X))\cong \HH^{p+q-1}(X,\sL_{p,q}^{\bullet}),$$
$$H^{p,q}_{\rm A}(X)=H^{p+q}(\sL_{p+1,q+1}^{\bullet}(X))\cong \HH^{p+q}(X,\sL_{p+1,q+1}^{\bullet}),$$
because $\sL^k_{p,q}$ are soft.

   We define a sub complex $\sS_{p,q}^{\bullet}$ of $\sL_{p,q}^{\bullet}$ by :
$$({\sS'}_p^{\bullet},\partial ):\; \sO \to \Omega^1 \to \ldots\to\Omega^{p-1}\to 0,\qquad ({\sS''}_q^{\bullet},\bar{\partial}):\; \bar{\sO}\to\bar{\Omega}^{1}\to\ldots\to\bar{\Omega}^{q-1}\to 0,$$
$$\sS_{p,q}^{\bullet}={\sS'}_p^{\bullet}+{\sS''}_q^{\bullet}:\; \sO+\bar{\sO} \to\Omega^1\oplus\bar{\Omega}^{1}\to\ldots\Omega^{p-1}\oplus\bar{\Omega}^{p-1}\to\bar{\Omega}^{p}\to\ldots\to\bar{\Omega}^{q-1}\to 0.$$
Note that the inclusion $\sS^{\bullet}\subset\sL^{\bullet}$ is an quasi-isomorphism \cite{Schw}.
There is another complex $\sB^{\bullet}_{p,q}$ used in \cite{Schw} which is defined by:

$$ \sB_{p,q}^{\bullet} :\; \C \stackrel{(+,-)}{\to}\sO \oplus\bar{\sO} \to\Omega^1\oplus\bar{\Omega}^{1}\to\ldots\Omega^{p-1}\oplus\bar{\Omega}^{p-1}\to\bar{\Omega}^{p}\to\ldots\to\bar{\Omega}^{q-1}\to 0.$$
and the following morphism of from $\sB^{\bullet}_{p,q}$ to $\sS^{\bullet}_{p,q}[1]$ is a quasi-isomorphism \cite{Schw}:
$$\begin{array}{ccccccc}
\C&\stackrel{(+,-)}{\to}&\sO\oplus\bar{\sO}&\to&\Omega^1\oplus\bar{\Omega}^{1}&\to&\ldots\\
\downarrow&&\downarrow +&&\downarrow&&\\
0&\to&\sO +\bar{\sO}&\to&\Omega^1\oplus\bar{\Omega}^{1}&\to&\ldots.
\end{array}$$


Therefore we have:
$$H^{p,q}_{\rm BC}(X)\cong\HH^{p+q}(X,\sL_{p,q}^{\bullet}[1])\cong\HH^{p+q}(X,\sS_{p,q}^{\bullet}[1])\cong\HH^{p+q}(X,\sB_{p,q}^{\bullet}),$$
and
$$H^{p,q}_{\rm A}(X)\cong\HH^{p+q}(X,\sL_{p+1,q+1}^{\bullet})\cong\HH^{p+q}(X,\sS_{p+1,q+1}^{\bullet})\cong\HH^{p+q+1}(X,\sB_{p+1,q+1}^{\bullet}),$$

\subsection{The Relative Bott-Chern Cohomology and Aeppli Cohomology of $X_n$ }
 \ \ \\
Let $\pi:\mathcal{X}\rightarrow B$ be a deformation of
$\pi^{-1}(0)=X$, where $X$ is a compact complex manifold. For
every integer $n\geq 0$, denote by
$B_{n}=Spec\,\mathcal{O}_{B,0}/m_{0}^{n+1}$ the $n$th order
infinitesimal neighborhood of the closed point $0\in B$ of the
base $B$. Let $X_{n}\subset \mathcal{X}$ be the complex space over
$B_{n}$. Let $\pi_{n}:X_{n}\rightarrow B_{n}$ be the $n$th order
deformation of $X$. Denote $\pi^{*}(m_{0})$ by $\mathcal{M}_{0}$. If we take the complex conjugation, we have
another complex structure of the differential manifold of $\mathcal{X}$, we denote this manifold by $\bar{\mathcal{X}}$ and $\pi$ induce a deformation $\bar{\pi}:\bar{\mathcal{X}}\rightarrow \bar{B}$  of $\bar{X}$.
Then we have $\bar{X}_{n}$ and $\bar{\pi}_{n}:\bar{X}_{n}\rightarrow \bar{B}_{n}.$ Let $\sC^{\omega}_B$ be the sheaf of
$\C$-valued real analytic functions on $B$. Denote $\sO_\sX^{\omega}= \pi^{*}(\sC^{\omega}_B)$, $\bar{\sO}_\sX^{\omega}= \bar{\pi}^{*}(\sC^{\omega}_B).
$ Let $m^{\omega}_{0}$ be the maximal idea of $\sC^{\omega}_{B,0}$ and
$\mathcal{M}_{0}^{\omega}= \pi^{*}(m^{\omega}_{0}) $, $\bar{\mathcal{M}}_{0}^{\omega}= \bar{\pi}^{*}(m^{\omega}_{0}) $ . For any sheaf of $\sO_\sX$(resp. $\bar{\sO}_\sX$) module $\sF$. Denote $\sF^{\omega}=\sF \otimes_{\sO_\sX} \sO_\sX^{\omega} $ (resp. $\sF^{\omega}=\sF \otimes_{\bar{\sO}_\sX} \bar{\sO}_\sX^{\omega}).$ Let ${\sO}_{X_n}^{\omega}= \sO_{\sX,0}^{\omega}/ ({\mathcal{M}}_{0}^{\omega})^{n+1}$£¬ ${\sO}_{\bar{X}_n}^{\omega}= \bar{\sO}_{\sX,0}^{\omega}/ ({\bar{\mathcal{M}}}_{0}^{\omega})^{n+1}$. For any sheaf of $\sO_{X_n}$(resp. ${\sO}_{\bar{X}_n}$) module $\sF$. Denote $\sF^{\omega}=\sF \otimes_{\sO_{X_n}} {\sO}_{X_n}^{\omega}$ (resp. $\sF^{\omega}=\sF \otimes_{\sO_{\bar{X}_n}} {\sO}_{\bar{X}_n}^{\omega}).$

For any given $p \geq 1, q \geq 1$, We define the complex $\sS^{\bullet}_{X_n/B_n}=\sS_{p,q;X_n/B_n}^{\bullet}
$ by:
$$({\sS'}_{p;X_n/B_n}^{\bullet},\partial_{X_n/B_n} ):\; \sO^{\omega}_{X_n} \to \Omega_{X_n/B_n}^{1;\omega} \to \ldots\to\Omega_{X_n/B_n}^{p-1;\omega}\to 0,$$
$$ ({\sS''}_{q;X_n/B_n}^{\bullet},\bar{\partial}_{X_n/B_n}):\; \sO^{\omega}_{\bar{X}_n} \to\bar{\Omega}_{X_n/B_n}^{1;\omega}\to\ldots\to\bar{\Omega}_{X_n/B_n}^{q-1;\omega}\to 0,$$
$\sS_{p,q;X_n/B_n}^{\bullet}={\sS'}_{p;X_n/B_n}^{\bullet}+{\sS''}_{q;X_n/B_n}^{\bullet}:\; $
 $$\sO^{\omega}_{X_n}+{\sO}_{\bar{X}_n}^{\omega} \to\Omega_{X_n/B_n}^{1;^{\omega}}\oplus\bar{\Omega}_{X_n/B_n}^{1;\omega}\to\ldots\Omega_{X_n/B_n}^{p-1;\omega}\oplus\bar{\Omega}_{X_n/B_n}^{p-1;\omega}\to\bar{\Omega}_{X_n/B_n}^{p;\omega}\to\ldots\to\bar{\Omega}_{X_n/B_n}^{q-1;\omega}\to 0.$$
We can also define $\sB^{\bullet}_{p,q;X_n/B_n}$  by:

$$ \sB_{p,q;X_n/B_n}^{\bullet} :\; \C^{\omega}_{B_n} \stackrel{(+,-)}{\to} \sO^{\omega}_{X_n} \oplus \sO^{\omega}_{\bar{X}_n} \to\Omega_{X_n/B_n}^{1;\omega}\oplus\bar{\Omega}_{X_n/B_n}^{1;\omega}\to\ldots\Omega_{X_n/B_n}^{p-1;\omega}\oplus\bar{\Omega}_{X_n/B_n}^{p-1;\omega} $$
$$\to\bar{\Omega}_{X_n/B_n}^{p;\omega}\to\ldots\to\bar{\Omega}_{X_n/B_n}^{q-1;\omega}\to 0,$$
where $\C^{\omega}_{B_n} = \pi^{-1} (\sC^{\omega}_{B,0}/(m^{\omega}_{0})^{n+1}).$

Then the Relative Bott-Chern cohomology and Aeppli cohomology of $X_n$ is defined by

$$H^{p,q}_{\rm BC}(X_n/B_n) \cong \HH^{p+q}(X,\sS_{p,q;X_n/B_n}^{\bullet}[1])\cong\HH^{p+q}(X_n,\sB_{p,q;X_n/B_n}^{\bullet}),$$
and
$$H^{p,q}_{\rm A}(X_n/B_n) \cong \HH^{p+q}(X,\sS_{p+1,q+1;X_n/B_n}^{\bullet})\cong\HH^{p+q+1}(X_n,\sB_{p+1,q+1;X_n/B_n}^{\bullet}).$$
\subsection{The Representation of the Relative Bott-Chern Cohomology and Aeppli Cohomology Classes }
 \ \ \\

In this subsection we will follow \cite{Schw} to construct a hypercocycle in $\check{Z}^{p+q}(X,\sB _{p,q}^{\bullet})$ to represent the relative Bott-Chern cohomology classes.  Let $[\theta]$ be an element of $H^{p,q}_{\rm BC}(X)$, represented by a closed $(p,q)$-form $\theta$. It is defined in $\HH^{p+q}(X,\sL_{p,q}[1]^{\bullet})$ by a hypercocycle, still denoted by  $\theta$ and defined by $\theta^{p,q}=\theta|_{U_j}$ and $\theta^{r,s}=0$ otherwise.
For gievn $p\geq 1$ and $q\geq 1$, there exists a hypercocycle $w=(c;u^{r,0};v^{0,s})\in\check{Z}^{p+q}(X,\sB_{p,q}^{\bullet})$ and an  hypercochain $\alpha=(\alpha^{r,s})\in\check{C}^{p+q-1}(X,\sL_{p,q}[1]^{\bullet})$ such that $\theta=\dcechg\alpha+w$. We represent the data in the following table:
$$\theta\longleftrightarrow\left[
\begin{array}{c|ccc}
\theta_v^{0,q-1}&&&\\
\vdots&&\alpha^{r,s}&\\
\theta_v^{0,0}&&&\\\hline
\theta_c&\theta_u^{0,0}&\cdots&\theta_u^{p-1,0}\end{array}\right]$$
The equality $\theta=\check{\delta}\alpha+w$ corresponds to the following relations:
$$(\bigstar)\left\{\begin{array}{rcll}
\theta^{p,q}&=&\bar{\partial} \alpha^{p-1,q-1}&\\
(-1)^{r+s}\check{\delta}\alpha^{r,s}&=&\bar{\partial}\alpha^{r,s-1}+\partial\alpha^{r-1,s}&\forall\, 1\leq r\leq p-1,\, 1\leq s\leq q-1\\
(-1)^s\check{\delta}\alpha^{0,s}&=&\bar{\partial}\alpha^{0,s-1}+\theta_v^{0,s}&\forall\, 1\leq s\leq q-1\\
(-1)^r\check{\delta}\alpha^{r,0}&=&\theta_u^{r,0}+\partial\alpha^{r-1,0}&\forall\, 1\leq r\leq p-1\\
\check{\delta}\alpha^{0,0}&=&\theta_u^{0,0}+\theta_v^{0,0}&\\
\check{\delta} \theta_u^{0,0}&=&\theta_c&
\end{array}\right.$$
Note that these relations involve relations of the hypercocycles for $\theta_u$ and $\theta_v$:
$$(-1)^r\check{\delta} \theta_u^{r,0}=\partial \theta_u^{r-1,0}\;\forall 1\leq r\leq p-1,\qquad (-1)^s\check{\delta} \theta_v^{0,s}=\bar{\partial} \theta_v^{0,s-1}\;\forall 1\leq s\leq q-1.$$

If $q=0$, we simply have:
$$\theta\longleftrightarrow\left(\theta_c,\theta_u^{0,0},\ldots,\theta_u^{p-1,0}\right)$$
with the relations:
$$\theta^{p,0}=\partial \theta_u^{p-1,0},\quad (-1)^r\check{\delta} \theta_u^{r,0}=\partial \theta_u^{r-1,0}\;\forall 1\leq r\leq p-1,\quad \check{\delta} \theta_u^{0,0}=\theta_c.$$
Similarly, if $p=0$, we have
$$\theta\longleftrightarrow\left(\theta_c,\theta_v^{0,0},\ldots,\theta_v^{0,q-1}\right)$$
with the relations:
$$\theta^{0,q}=-\bar{\partial} \theta_v^{0,q-1},\quad (-1)^s\check{\delta} \theta_v^{0,s}=\bar{\partial} \theta_v^{0,s-1}\;\forall 1\leq r\leq p-1,\quad -\check{\delta} \theta_v^{0,0}=\theta_c.$$

Similarly, let $[\theta]$ be an element of $H^{p,q}_{\rm BC}(X_n/B_n)$, then it can be represented by a \v Cech
hypercocycle $\theta_u$, $\theta_v$ and $\theta_c$ of $\check{Z}^{p+q}(X,\sB _{p,q;X_n/B_n}^{\bullet})$ with the relations:$$(-1)^r\check{\delta} \theta_u^{r,0}=\partial \theta_u^{r-1,0}\;\forall 1\leq r\leq p-1,\qquad (-1)^s\check{\delta} \theta_v^{0,s}=\bar{\partial} \theta_v^{0,s-1}\;\forall 1\leq s\leq q-1.$$$$  \check{\delta}\theta_u^{0,0}=\theta_c,\qquad -\check{\delta} \theta_v^{0,0}=\theta_c; $$
while for an element $[\theta]$ of $H^{p,q}_{\rm A}(X_n/B_n)$, it can be represented by a \v Cech
hypercocycle $\theta_u$ and $\theta_v$ of $\check{Z}^{p+q+1}(X,\sB _{p+1,q+1;X_n/B_n}^{\bullet})$ with the relations:$$(-1)^r\check{\delta} \theta_u^{r,0}=\partial \theta_u^{r-1,0}\;\forall 1\leq r\leq p,\qquad (-1)^s\check{\delta} \theta_v^{0,s}=\bar{\partial} \theta_v^{0,s-1}\;\forall 1\leq s\leq q,$$ $$  \check{\delta}\theta_u^{0,0}=\theta_c,\qquad -\check{\delta}\theta_v^{0,0}=\theta_c. $$

Before the end of this subsections, we will introduce some important maps which will be used in
the computation in $\S4$. \\
Define
 $$\partial^{\bar{\partial},\sB}_{X_n/B_n}: H^{\bullet}(X_n, \Omega^{p-1;\omega}_{X_{n}/B_{n}}) \rightarrow   \HH^{\bullet+p} (X_n,\sB_{p,q;X_n/B_n}^{\bullet}) $$ in the following way:\\
 Let $[\theta]$ be an element of $H^{\bullet}(X_n, \Omega^{p-1;\omega}_{X_{n}/B_{n}})$ then $\theta$ can be represented
 by a cocycle of $\check{Z}^{\bullet}(X,\Omega^{p-1;\omega}_{X_{n}/B_{n}})$, we define $\partial^{\bar{\partial},\sB}_{X_n/B_n}([\theta])$ to be the cohomology class associated to the hypercocyle in $\check{Z}^{p+\bullet}(X,\sB _{p,q;X_n/B_n}^{\bullet})$ given by $\theta_u^{p-1,0}=\theta$, $\theta_u^{r,0}=0\; \forall 0\leq r\leq p-2$ and $\theta_v^{0,r}=0\; \forall 0\leq r \leq q-1$, $ \theta_c=0. $
 \begin{lemma} \label{lemma1}
 $\partial^{\bar{\partial},\sB}_{X_n/B_n}$ is well defined.
 \end{lemma}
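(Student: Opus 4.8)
The plan is to unpack the two requirements hidden in the phrase ``well defined'': first, that the prescribed data really does define a hypercocycle in $\check{Z}^{p+\bullet}(X,\sB_{p,q;X_n/B_n}^{\bullet})$, and second, that the resulting hypercohomology class is independent of the choice of \v Cech cocycle $\theta$ representing the given class in $H^{\bullet}(X_n,\Omega^{p-1;\omega}_{X_n/B_n})$. Throughout I would fix a cover $\sU=\{U_i\}$ of $X$ that is acyclic for the sheaves occurring in $\sB^{\bullet}_{p,q;X_n/B_n}$, so that both the sheaf cohomology $H^{\bullet}(X_n,\Omega^{p-1;\omega}_{X_n/B_n})$ and the hypercohomology are computed by the associated \v Cech double complex, with total differential $D=\check{\delta}\pm d_{\sB}$ (here $d_{\sB}$ is the differential of $\sB^{\bullet}$ and $\check{\delta}$ the \v Cech differential). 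Since $\Omega^{p-1;\omega}_{X_n/B_n}$ sits in complex-degree $p$ of $\sB^{\bullet}_{p,q;X_n/B_n}$, a \v Cech $k$-cochain valued there has total degree $k+p$, which matches the target $\HH^{\bullet+p}$. The key structural observation, which drives both steps, is that the holomorphic tower $\sO\to\Omega^1\to\cdots\to\Omega^{p-1}$ is \emph{truncated} at $\Omega^{p-1}$: the differential $d_{\sB}$ out of the degree-$p$ term $\Omega^{p-1;\omega}\oplus\bar{\Omega}^{p-1;\omega}$ lands in $\bar{\Omega}^{p;\omega}$ and is just $\bar{\partial}$ on the $\bar{\Omega}$ summand, while the $\partial$-image of the $\Omega^{p-1}$ summand would be $\Omega^{p}$, which has been deleted from the complex. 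Hence $d_{\sB}$ annihilates the $\Omega^{p-1}$ summand entirely.

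First I would verify the hypercocycle condition. For the proposed data the only nonzero component is $\theta_u^{p-1,0}=\theta$, so $D$ applied to it has a \v Cech part $\check{\delta}\theta$ (in the $\Omega^{p-1}$ slot) and a complex part $d_{\sB}\theta$ (in the $\bar{\Omega}^{p}$ slot). By the truncation observation $d_{\sB}\theta=0$, and all the lower relations in $(\bigstar)$ are vacuous because $\theta_u^{r,0}=0$ for $r\le p-2$, $\theta_v^{0,s}=0$, and $\theta_c=0$. Thus the equation $D(\text{data})=0$ collapses to the single condition $\check{\delta}\theta=0$, which holds precisely because $\theta\in\check{Z}^{\bullet}(X,\Omega^{p-1;\omega}_{X_n/B_n})$ is a \v Cech cocycle. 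So the assignment genuinely produces a class in $\HH^{\bullet+p}(X_n,\sB^{\bullet}_{p,q;X_n/B_n})$.

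Next I would prove independence of the representative. If $\theta'=\theta+\check{\delta}\eta$ with $\eta\in\check{C}^{\bullet-1}(X,\Omega^{p-1;\omega}_{X_n/B_n})$, then the two associated hypercocycles differ only in the $\Omega^{p-1}$ slot, by $\check{\delta}\eta$. I would set $\beta\in\check{C}^{p+\bullet-1}(X,\sB^{\bullet}_{p,q;X_n/B_n})$ equal to $\eta$ in the $\Omega^{p-1}$ slot and zero elsewhere. Since $\check{\delta}$ preserves complex-degree and $d_{\sB}\eta=0$ by the same truncation argument, one gets $D\beta=\pm\,\check{\delta}\eta$ sitting in the $\Omega^{p-1}$ slot, i.e. (up to the sign in $D$) exactly the difference of the two hypercocycles. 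Hence that difference is a hypercoboundary and the two classes coincide, so $\partial^{\bar{\partial},\sB}_{X_n/B_n}([\theta])$ depends only on the class $[\theta]$.

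I expect no serious obstacle: the entire content is the truncation observation, which makes $\partial\theta$ and $\partial\eta$ vanish inside the complex. The one point needing care is the bookkeeping of the sign convention in $D=\check{\delta}\pm d_{\sB}$, so that $\beta$ (or $-\beta$) is genuinely a primitive for the difference of the two hypercocycles; this is routine once the sign in the total differential is fixed consistently with the relations $(\bigstar)$.
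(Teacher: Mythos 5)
Your proof is correct and follows essentially the same route as the paper's: the cocycle condition reduces to $\check{\delta}\theta=0$ because the differential of $\sB^{\bullet}_{p,q;X_n/B_n}$ annihilates the $\Omega^{p-1}$ summand (the truncation you emphasize), and independence of the representative is shown by exhibiting an explicit primitive concentrated in the $\Omega^{p-1}$ slot, which is exactly the paper's cochain $\alpha_u^{p-1,0}=(-1)^{p-1}\alpha'$. The only difference is cosmetic: the paper pins down the sign $(-1)^{p-1}$ explicitly, whereas you defer it to routine bookkeeping.
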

 \begin{proof}
 It is easy to check that the hypercochian given by $\theta_u$, $\theta_v$ and $\theta_c$ is a hypercocycle.
 On the other hand if there exists a cochain $\alpha^{'}$ in $\check{C}^{\bullet-1}(X,\Omega^{p-1;\omega}_{X_{n}/B_{n}})$ such that
 $\check{\delta} \alpha^{'}= \theta$, then if we take a hypercochian $\alpha$ in $\check{C}^{p+\bullet-1}(X,\sB _{p,q;X_n/B_n}^{\bullet})$ given by $\alpha_u^{p-1,0}=(-1)^{p-1}\alpha^{'}$, $\alpha_u^{r,0}=0\; \forall 0\leq r\leq p-2$ , $\alpha_v^{0,r}=0\; \forall 0\leq r \leq q-1$ and $\alpha_c=0$ we have $\dcechg\alpha = \partial^{\bar{\partial},\sB}_{X_n/B_n}([\theta])$. Therefore $\partial^{\bar{\partial},\sB}_{X_n/B_n}([\theta])=0.$
 \end{proof}
 Similarly, we can define $$\bar{\partial}^{{\partial},\sB}_{X_n/B_n}: H^{\bullet}(\bar{X}_n, \bar{\Omega}^{q-1;\omega}_{X_{n}/B_{n}}) \rightarrow   \HH^{\bullet+q} (X_n,\sB_{p,q;X_n/B_n}^{\bullet}) $$ in the following way:\\
 Let $[\theta]$ be an element of $H^{\bullet}(\bar{X}_n, \bar{\Omega}^{q-1;\omega}_{X_{n}/B_{n}})$ then $\theta$ can be represented
 by a cocycle of $\check{Z}^{\bullet}(\bar{X},\bar{\Omega}^{q-1;\omega}_{X_{n}/B_{n}})$, we define $\bar{\partial}^{{\partial},\sB}_{X_n/B_n}([\theta])$ to be the cohomology class associated to the hypercocyle in $\check{Z}^{q+\bullet}(X,\sB _{p,q;X_n/B_n}^{\bullet})$ given by $\theta_v^{0,q-1}=\theta$, $\theta_v^{0,r}=0\; \forall 0\leq r\leq q-2$ , $\theta_u^{r,0}=0\; \forall 0\leq r \leq p-1$ and $\theta_c=0$. This map is also well defined and the proof is just as lemma \ref{lemma1}.

 Define
 $$\partial^{\sB,\bar{\partial}}_{X_n/B_n}: \HH^{\bullet+p} (X_n,\sB_{p,q;X_n/B_n}^{\bullet}) \rightarrow  H^{\bullet}(X_n, \Omega^{p;\omega}_{X_{n}/B_{n}}) $$ in the following way:\\
 Let $[\theta]$ be an element of $ \HH^{\bullet+p} (X_n,\sB_{p,q;X_n/B_n}^{\bullet})$ then $\theta$ can be represented by a hypercocycle of $\check{Z}^{p+\bullet}(X,\sB _{p,q;X_n/B_n}^{\bullet})$, we define ${\partial}^{\sB,\bar{\partial}}_{X_n/B_n}([\theta])$ to be the cohomology class associated to the cocyle in $\check{Z}^{\bullet}(X,\Omega^{p;\omega}_{X_{n}/B_{n}})$ given by $\partial_{X_n/B_n} \theta_u^{p-1,0}$.
  \begin{lemma} \label{lemma2}
 $\partial^{\sB, \bar{\partial}}_{X_n/B_n}$ is well defined.
 \end{lemma}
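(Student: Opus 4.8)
The plan is to check the two conditions hidden in the phrase ``well defined'': first that the \v Cech cochain $\partial_{X_n/B_n}\theta_u^{p-1,0}$ really is a cocycle in $\check Z^{\bullet}(X,\Omega^{p;\omega}_{X_n/B_n})$, and second that the resulting \v Cech cohomology class is unchanged if we replace the hypercocycle $\theta$ by another representative of the same class in $\HH^{\bullet+p}(X_n,\sB_{p,q;X_n/B_n}^{\bullet})$. The structural point that makes the definition sensible is that inside the complex $\sB^{\bullet}_{p,q;X_n/B_n}$ the arrow leaving the slot $\Omega^{p-1;\omega}_{X_n/B_n}$ is truncated to zero, so that $\partial_{X_n/B_n}\colon\Omega^{p-1;\omega}_{X_n/B_n}\to\Omega^{p;\omega}_{X_n/B_n}$, although it is no longer part of the differential of the complex, survives as a genuine morphism of sheaves; this ``discarded'' piece of the differential is exactly what the map $\partial^{\sB,\bar\partial}_{X_n/B_n}$ records.

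For the cocycle condition I would use that, as operators on \v Cech cochains valued in a sheaf, $\partial_{X_n/B_n}$ commutes with the \v Cech differential $\check\delta$ (it is a sheaf morphism, hence compatible with the restriction maps to multiple intersections). Isolating the component of the hypercocycle equation $\dcechg\theta=0$ that lands in the $\Omega^{p-1;\omega}_{X_n/B_n}$-slot one \v Cech degree up yields the relative version of the relation recorded above, namely $(-1)^{p-1}\check\delta\theta_u^{p-1,0}=\partial_{X_n/B_n}\theta_u^{p-2,0}$. Consequently
$$\check\delta\bigl(\partial_{X_n/B_n}\theta_u^{p-1,0}\bigr)=\partial_{X_n/B_n}\bigl(\check\delta\theta_u^{p-1,0}\bigr)=(-1)^{p-1}\partial_{X_n/B_n}^2\theta_u^{p-2,0}=0,$$
because $\partial_{X_n/B_n}^2=0$, so $\partial_{X_n/B_n}\theta_u^{p-1,0}$ indeed represents a class in $H^{\bullet}(X_n,\Omega^{p;\omega}_{X_n/B_n})$.

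For independence of the representative, suppose $\theta$ and $\theta'$ are hypercocycles with $\theta-\theta'=\dcechg\alpha$ for a hypercochain $\alpha=(\alpha_c;\alpha_u^{r,0};\alpha_v^{0,s})$. Reading off the $\Omega^{p-1;\omega}_{X_n/B_n}$-component of $\dcechg\alpha$, the only incoming complex-differential contribution is $\partial_{X_n/B_n}\alpha_u^{p-2,0}$ from the $\Omega^{p-2;\omega}_{X_n/B_n}$-slot, so
$$\theta_u^{p-1,0}-{\theta'_u}^{p-1,0}=\pm\,\check\delta\alpha_u^{p-1,0}\pm\partial_{X_n/B_n}\alpha_u^{p-2,0}.$$
Applying $\partial_{X_n/B_n}$ kills the second term by $\partial_{X_n/B_n}^2=0$ and turns the first into $\pm\check\delta\bigl(\partial_{X_n/B_n}\alpha_u^{p-1,0}\bigr)$, a \v Cech coboundary. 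Hence $\partial_{X_n/B_n}\theta_u^{p-1,0}$ and $\partial_{X_n/B_n}{\theta'_u}^{p-1,0}$ differ by a coboundary and define the same class, exactly in parallel with the proof of Lemma \ref{lemma1}.

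The genuinely delicate part is not analytic but bookkeeping: correctly writing down the total differential $\dcechg$ of the \v Cech--hypercohomology double complex and identifying precisely which terms feed into the $\Omega^{p-1;\omega}_{X_n/B_n}$-slot (only $\check\delta\theta_u^{p-1,0}$ from the same slot and $\partial_{X_n/B_n}\theta_u^{p-2,0}$ from the slot below), while keeping all signs internally consistent. Once that component relation is pinned down, both verifications collapse to the single identity $\partial_{X_n/B_n}^2=0$ together with the commutation of $\partial_{X_n/B_n}$ with $\check\delta$, so I expect no new difficulty beyond what already appeared in Lemma \ref{lemma1}.
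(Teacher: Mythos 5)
Your proposal is correct and follows essentially the same two-step argument as the paper: the cocycle condition comes from the component relation $(-1)^{p-1}\check\delta\theta_u^{p-1,0}=\partial_{X_n/B_n}\theta_u^{p-2,0}$ together with $\partial_{X_n/B_n}^2=0$, and well-definedness on classes comes from applying $\partial_{X_n/B_n}$ to the $(p-1,0)$-component of $\dcechg\alpha$, exactly as in the paper (which phrases it as sending hypercoboundaries to coboundaries via $\alpha'=(-1)^p\partial_{X_n/B_n}\alpha_u^{p-1,0}$). The only cosmetic difference is the sign convention for commuting $\partial_{X_n/B_n}$ past $\check\delta$, which does not affect either conclusion.
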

  \begin{proof}
 At first we need to check the cochian given by $\partial_{X_n/B_n} \theta_u^{p-1,0}$ is a cocycle. In fact,
 since $\theta$ is a hypercocycle in $\check{Z}^{p+\bullet}(X,\sB _{p,q;X_n/B_n}^{\bullet})$, we have
 $(-1)^{p-1}\check{\delta} \theta_u^{p-1,0}= \partial_{X_n/B_n} \theta_u^{p-2,0}$, therefore, $\check{\delta} \partial_{X_n/B_n} \theta_u^{p-1,0}=(-1)^p\partial_{X_n/B_n}\circ \partial_{X_n/B_n}\theta_u^{p-2,0}=0$.

 On the other hand if there exists a cochain $\alpha$ in $\check{C}^{p+\bullet-1}(X,\sB _{p,q;X_n/B_n}^{\bullet})$ such that
 $\dcechg\alpha= \theta$, then if we take a cochian $\alpha^{'}$ in $\check{C}^{\bullet-1}(X,\Omega^{p;\omega}_{X_{n}/B_{n}})$ given by $\alpha^{'} = (-1)^p \partial_{X_n/B_n}  \alpha_u^{p-1,0}$, we have $ \check{\delta} \alpha^{'} = (-1)^p\check{\delta} \partial_{X_n/B_n}  \alpha_u^{p-1,0} = (-1)^{p+1} \partial_{X_n/B_n} \check{\delta} \alpha_u^{p-1,0} = (-1)^{p+1+p-1}\partial_{X_n/B_n}  \theta_u^{p-1,0}=\partial^{\sB,\bar{\partial},}_{X_n/B_n}([\theta])$. Therefore $\partial^{\bar{\partial},\sB}_{X_n/B_n}([\theta])=0.$
 \end{proof}
 Similarly, we can define $$\bar{\partial}^{\sB,{\partial}}_{X_n/B_n}: \HH^{\bullet+q} (X_n,\sB_{p,q;X_n/B_n}^{\bullet}) \rightarrow  H^{\bullet}(\bar{X}_n, \Omega^{q;\omega}_{X_{n}/B_{n}}) $$ in the following way:\\
 Let $[\theta]$ be an element of $ \HH^{\bullet+q} (X_n,\sB_{p,q;X_n/B_n}^{\bullet})$ then $\theta$ can be represented by a hypercocycle of $\check{Z}^{q+\bullet}(X,\sB _{p,q;X_n/B_n}^{\bullet})$, we define $\bar{\partial}^{\sB,{\partial}}_{X_n/B_n}([\theta])$ to be the cohomology class associated to the cocyle in $\check{Z}^{\bullet}(X,\bar{\Omega}^{q;\omega}_{X_{n}/B_{n}})$ given by $\bar{\partial}_{X_n/B_n} \theta_u^{0,q-1}$. This map is also well defined and the proof is just as lemma \ref{lemma2}.\\
 \begin{remark}
 The natural maps from $H^{p,q}_{\rm BC}(X_n/B_n)$ to  $H^{q}(X_n, \Omega^{p;\omega}_{X_{n}/B_{n}})$ and from $H^{q}(X_n, \Omega^{p;\omega}_{X_{n}/B_{n}})$ to $H^{p,q}_{\rm A}(X_n/B_n)$ mentioned in $\S2.1$ respectively are exactly the map:
$$\partial^{\sB,\bar{\partial}}_{X_n/B_n}: \HH^{q+p} (X_n,\sB_{p,q;X_n/B_n}^{\bullet})( \cong H^{p,q}_{\rm BC}(X_n/B_n)) \rightarrow  H^{q}(X_n, \Omega^{p;\omega}_{X_{n}/B_{n}}), $$
 $$\partial^{\bar{\partial},\sB}_{X_n/B_n}: H^{q}(X_n, \Omega^{p;\omega}_{X_{n}/B_{n}}) \rightarrow \HH^{q+p+1} (X_n,\sB_{p+1,q+1;X_n/B_n}^{\bullet})( \cong H^{p,q}_{\rm A}(X_n/B_n)) . $$ and we denote these maps by $r_{BC,\bar{\partial}} $ and $r_{\bar{\partial},A} .$ \\
 We also denote the following two maps:
 $$\partial^{\bar{\partial},\sB}_{X_n/B_n}: H^{q}(X_n, \Omega^{p-1;\omega}_{X_{n}/B_{n}}) \rightarrow \HH^{q+p} (X_n,\sB_{p,q;X_n/B_n}^{\bullet})( \cong H^{p,q}_{\rm BC}(X_n/B_n)), $$
 $$\partial^{\sB,\bar{\partial}}_{X_n/B_n}: \HH^{q+p+1} (X_n,\sB_{p+1,q+1;X_n/B_n}^{\bullet})( \cong H^{p,q}_{\rm A}(X_n/B_n)) \rightarrow  H^{q}(X_n, \Omega^{p+1;\omega}_{X_{n}/B_{n}}). $$
 by
$\partial^{\bar{\partial},BC}_{X_n/B_n}$ and $\partial^{A,\bar{\partial}}_{X_n/B_n}.$
 \end{remark}
The following lemma is an important observation which will be used for the computation in $\S4$.
\begin{lemma} \label{lemma4}
 Let $[\theta]$ be an element of $ \HH^{l} (X_n,\sB_{p,q;X_n/B_n}^{\bullet})$ which is represented by
 an element $\theta$ in $\check{Z}^{l}(X,\sB _{p,q;X_n/B_n}^{\bullet})$ given by $\theta_u$,
$\theta_v$ and $\theta_c$, then $\del_{X_n/B_n}(\theta- \theta_u^{p-1,0})$ is a hypercoboundary.
\end{lemma}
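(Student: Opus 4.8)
The plan is to unwind the definition of a hypercocycle and to exploit two elementary facts about the operator $\partial_{X_n/B_n}$: that it annihilates both the constant sheaf and the antiholomorphic tower, and that on the holomorphic tower the cocycle relations force it to become a \v Cech coboundary. Write the given representative as $\theta=(\theta_c;\theta_u^{r,0};\theta_v^{0,s})\in\check Z^l(X,\sB_{p,q;X_n/B_n}^\bullet)$. Since $\theta_c$ is locally constant and each $\theta_v^{0,s}$ is a section of the antiholomorphic bundle $\bar{\Omega}^{s;\omega}_{X_n/B_n}$, we have $\partial_{X_n/B_n}\theta_c=0$ and $\partial_{X_n/B_n}\theta_v^{0,s}=0$ (the relative holomorphic derivative of an antiholomorphic form vanishes). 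Hence $\partial_{X_n/B_n}(\theta-\theta_u^{p-1,0})$ is supported entirely on the holomorphic tower, its $\Omega^{r;\omega}_{X_n/B_n}$-component being $\partial_{X_n/B_n}\theta_u^{r-1,0}$ for $1\le r\le p-1$ and nothing in position $\Omega^{p;\omega}_{X_n/B_n}$, precisely because $\theta_u^{p-1,0}$ has been removed. This is exactly why the subtraction is present: $\partial_{X_n/B_n}\theta_u^{p-1,0}$ is the one component that would fall outside $\sB_{p,q}^\bullet$ (it represents $\partial^{\sB,\bar\partial}_{X_n/B_n}([\theta])$), and it is not a coboundary.

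Next I would invoke the cocycle relations $(-1)^r\check{\delta}\theta_u^{r,0}=\partial_{X_n/B_n}\theta_u^{r-1,0}$, valid for $1\le r\le p-1$, which are the holomorphic half of the relations listed in $(\bigstar)$. These rewrite each surviving component as $\partial_{X_n/B_n}\theta_u^{r-1,0}=(-1)^r\check{\delta}\theta_u^{r,0}$, that is, as a \v Cech coboundary of the next holomorphic component. Thus $\partial_{X_n/B_n}(\theta-\theta_u^{p-1,0})$ is the total-degree-$(l+1)$ hypercochain of $\sB_{p,q;X_n/B_n}^\bullet$ whose $\Omega^{r;\omega}_{X_n/B_n}$ entry is $(-1)^r\check{\delta}\theta_u^{r,0}$ for $1\le r\le p-1$ and which vanishes elsewhere.

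To exhibit this as a hypercoboundary I would build an explicit primitive $\beta$ supported on the holomorphic tower of the form $\beta_u^{r,0}=b_r\,\theta_u^{r,0}$ for $1\le r\le p-1$, with $\beta_u^{0,0}=0$ and all other entries zero, and solve for the scalars $b_r$. Computing $\dcechg\beta$ position by position, the $\Omega^{r;\omega}_{X_n/B_n}$ entry is $\check{\delta}\beta_u^{r,0}\pm\partial_{X_n/B_n}\beta_u^{r-1,0}$; feeding the cocycle relation into the second summand turns it back into a multiple of $\check{\delta}\theta_u^{r,0}$, so the whole entry collapses to a scalar times $\check{\delta}\theta_u^{r,0}$. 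Matching this against $(-1)^r\check{\delta}\theta_u^{r,0}$ yields a triangular recursion in the $b_r$ that is uniquely solvable once $\beta_u^{0,0}=0$ is fixed; this same initial condition forces the $\sO^\omega_{X_n}$-entry of $\dcechg\beta$ to vanish, as required. Because $\beta$ carries no $\C^\omega_{B_n}$- or $\bar{\Omega}$-components and $\partial_{X_n/B_n}\beta_u^{p-1,0}$ is killed at the junction $\Omega^{p-1;\omega}_{X_n/B_n}\oplus\bar{\Omega}^{p-1;\omega}_{X_n/B_n}\to\bar{\Omega}^{p;\omega}_{X_n/B_n}$, no spurious entries appear in positions $0$, $1$, or $\ge p+1$. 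This gives $\dcechg\beta=\partial_{X_n/B_n}(\theta-\theta_u^{p-1,0})$, proving the claim.

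The routine part is the vanishing of $\partial$ on the constant and antiholomorphic pieces together with the rewriting via the cocycle relations. The step demanding the most care will be the sign bookkeeping in the total complex $\mathrm{Tot}\big(\check C^\bullet(X,\sB_{p,q;X_n/B_n}^\bullet)\big)$ — keeping the total-differential sign convention consistent with the signs already built into $(\bigstar)$ — and the verification that the truncation and the boundary positions ($\sO^\omega_{X_n}$, $\C^\omega_{B_n}$, and the junction into $\bar{\Omega}^{p;\omega}_{X_n/B_n}$) contribute nothing, so that $\beta$ is a genuine primitive rather than one correct only up to lower-order terms.
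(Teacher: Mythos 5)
Your proposal is correct and follows essentially the same route as the paper: both reduce to the observation that $\del_{X_n/B_n}$ kills $\theta_c$ and the $\theta_v$-tower, leaving only the components $\del_{X_n/B_n}\theta_u^{r-1,0}=(-1)^r\check{\delta}\theta_u^{r,0}$, and both exhibit an explicit primitive supported on the holomorphic tower whose entries are scalar multiples of the $\theta_u^{r,0}$. The paper simply writes down the solution of your triangular recursion directly, namely $\alpha_u^{r,0}=\theta_u^{r,0}$ for $r$ odd and $\alpha_u^{r,0}=0$ for $r$ even (so $b_r$ alternates between $1$ and $0$ starting from $b_0=0$).
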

\begin{proof}
The hypercochian $\del_{X_n/B_n}(\theta- \theta_u^{p-1,0})$ is given by $(\del_{X_n/B_n}(\theta- \theta_u^{p-1,0}))_u^{r,0}=\del_{X_n/B_n} \theta_u^{r-1,0}, \forall 0 < r <p-1$ , $ (\del_{X_n/B_n}(\theta- \theta_u^{p-1,0}))_u^{0,0}=0$ , $(\del_{X_n/B_n}(\theta- \theta_u^{p-1,0}))_v^{0,r}=0, \forall 0 \leq r \leq q-1 $ and $(\del_{X_n/B_n}(\theta- \theta_u^{p-1,0}))_c=0$. Let $\alpha$ be the hypercochian in $\check{C}^{l}(X,\sB _{p,q;X_n/B_n}^{\bullet})$ given by $\alpha_u^{2r,0}=0, \forall 0 \leq r < p/2, $ $\alpha_u^{2r-1,0}=\theta_u^{2r-1,0}, \forall 0 < r \leq p/2$, $\alpha_v^{0,r}=0, \forall 0 \leq r \leq q-1 $ and $\alpha_c=0$  and
it is easy to see that $\dcechg \alpha =\del_{X_n/B_n}(\theta- \theta_u^{p-1,0})$. Therefore $\del_{X_n/B_n}(\theta- \theta_u^{p-1,0})$ is a hypercoboundary.
\end{proof}
Similarly, we have
\begin{lemma} \label{lemma5}
 Let $[\theta]$ be an element of $ \HH^{l} (X_n,\sB_{p,q;X_n/B_n}^{\bullet})$ which is represented by
 an element $\theta$ in $\check{Z}^{l}(X,\sB _{p,q;X_n/B_n}^{\bullet})$ given by $\theta_u$,
$\theta_v$ and $\theta_c$, then $\delbar_{X_n/B_n}(\theta- \theta_v^{0,q-1})$ is a hypercoboundary.
\end{lemma}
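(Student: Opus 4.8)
The plan is to run the proof of Lemma~\ref{lemma4} with the two columns of the hypercochain interchanged: the holomorphic entries $\theta_u^{r,0}$ are replaced by the antiholomorphic entries $\theta_v^{0,s}$, the operator $\del_{X_n/B_n}$ is replaced by $\delbar_{X_n/B_n}$, and the cocycle relation $(-1)^r\check{\delta}\theta_u^{r,0}=\del\theta_u^{r-1,0}$ is replaced by its conjugate $(-1)^s\check{\delta}\theta_v^{0,s}=\delbar\theta_v^{0,s-1}$. First I would read off the components of the hypercochain $\delbar_{X_n/B_n}(\theta-\theta_v^{0,q-1})$, and then I would exhibit an explicit hypercochain $\beta$ whose hypercoboundary $\dcechg\beta$ equals it.

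For the component computation, the reason for subtracting $\theta_v^{0,q-1}$ is that $\delbar_{X_n/B_n}\theta_v^{0,q-1}$ would land in $\bar{\Omega}^{q;\omega}_{X_n/B_n}$, which is not a term of $\sB^{\bullet}_{p,q;X_n/B_n}$; after the subtraction the highest antiholomorphic slot is $\theta_v^{0,q-2}$ and $\delbar$ stays inside the complex. Since $\delbar_{X_n/B_n}$ annihilates the relative holomorphic entries $\theta_u^{r,0}$ and the locally constant scalar entry $\theta_c$, I expect that the only surviving components are
$$
\bigl(\delbar_{X_n/B_n}(\theta-\theta_v^{0,q-1})\bigr)_v^{0,s}=\delbar_{X_n/B_n}\theta_v^{0,s-1},\qquad 0<s\le q-1,
$$
together with $\bigl(\delbar_{X_n/B_n}(\theta-\theta_v^{0,q-1})\bigr)_v^{0,0}=0$, all holomorphic components $(\,\cdot\,)_u^{r,0}=0$, and $(\,\cdot\,)_c=0$.

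Next I would set $\beta$ to be the checkerboard hypercochain in $\check{C}^{l}(X,\sB^{\bullet}_{p,q;X_n/B_n})$ given by $\beta_v^{0,2s}=0$ for $0\le s<q/2$, $\beta_v^{0,2s-1}=\theta_v^{0,2s-1}$ for $0<s\le q/2$, $\beta_u^{r,0}=0$ for $0\le r\le p-1$, and $\beta_c=0$, exactly mirroring the $\alpha$ used in Lemma~\ref{lemma4}. Computing $\dcechg\beta$ slot by slot, the \v Cech part $\check{\delta}\beta_v^{0,s}$ vanishes on the even (zero) slots, while on the odd slots the conjugate cocycle relation $(-1)^s\check{\delta}\theta_v^{0,s}=\delbar\theta_v^{0,s-1}$ converts the \v Cech contribution into exactly the term $\delbar_{X_n/B_n}\theta_v^{0,s-1}$ found above; on the even slots the internal differential $\delbar\beta_v^{0,s-1}$ supplies the same term. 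Hence $\dcechg\beta=\delbar_{X_n/B_n}(\theta-\theta_v^{0,q-1})$ for every slot, and the latter is a hypercoboundary.

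The main obstacle I anticipate is purely the sign-and-parity bookkeeping: one must check that the alternating pattern of $\beta$ terminates correctly at both ends (the slots $s=0$ and $s=q-1$) and that the $(-1)^s$ signs in $\dcechg$ and in the cocycle relation combine so that the even- and odd-slot contributions reinforce the target rather than cancel, with the endpoint behavior depending on the parity of $q$ exactly as the $p/2$ cutoffs do in Lemma~\ref{lemma4}. A secondary point worth verifying explicitly is that $\delbar_{X_n/B_n}$ genuinely kills the relative holomorphic forms $\theta_u^{r,0}$ and the scalar term $\theta_c$, since this is what guarantees that the holomorphic column and the \v Cech constant drop out of $\delbar_{X_n/B_n}(\theta-\theta_v^{0,q-1})$, leaving only the antiholomorphic components matched by $\dcechg\beta$.
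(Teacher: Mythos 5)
Your proposal is correct and is exactly the argument the paper intends: Lemma \ref{lemma5} is stated with no separate proof ("Similarly, we have\dots"), the understood proof being the proof of Lemma \ref{lemma4} with the $u$-column replaced by the $v$-column, $\del_{X_n/B_n}$ by $\delbar_{X_n/B_n}$, the cocycle relation by its conjugate, and the alternating cochain $\alpha$ by your checkerboard $\beta$ on the $\theta_v^{0,s}$ slots. Your component computation and the endpoint/parity discussion match this mirrored argument, so there is nothing to add.
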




\section{The Jumping Phenomenon and Obstructions}\label{section3}
\renewcommand{\theequation}
{2.\arabic{equation}} \setcounter{equation}{-1}

There is a Hodge theory also for Bott-Chern and Aeppli cohomologies, see \cite{Schw}. More precisely, fixed a Hermitian metric on $X$, one has that
$$ H^{\bullet,\bullet}_{\rm BC}(X) \;\simeq\; \ker\tilde\Delta_{\rm BC} \qquad \text{ and }\qquad H^{\bullet,\bullet}_{\rm A}(X) \;\simeq\; \ker\tilde\Delta_{\rm A} \;,$$
where
$$ \tilde\Delta_{\rm BC} \;:=\;
\left(\del\delbar\right)\left(\del\delbar\right)^*+\left(\del\delbar\right)^*\left(\del\delbar\right)+\left(\delbar^*\del\right)\left(\delbar^*\del\right)^*+\left(\delbar^*\del\right)^*\left(\delbar^*\del\right)+\delbar^*\delbar+\del^*\del $$
and
$$ \tilde\Delta_{\rm A} \;:=\; \del\del^*+\delbar\delbar^*+\left(\del\delbar\right)^*\left(\del\delbar\right)+\left(\del\delbar\right)\left(\del\delbar\right)^*+\left(\delbar\del^*\right)^*\left(\delbar\del^*\right)+\left(\delbar\del^*\right)\left(\delbar\del^*\right)^* $$
are $4$-th order elliptic self-adjoint differential operators. In particular, one gets that
$$ \dim_\C H^{\bullet,\bullet}_{\sharp}(X) \;<\;+\infty \qquad \text{ for } \sharp\in\left\{\delbar,\,\del,\,BC,\,A\right\} \;.$$

Let $\pi:\mathcal{X}\rightarrow B$ be a deformation of
$\pi^{-1}(0)=X$, where $X$ is a compact complex manifold and $B$ is a neighborhood of the origin in $\C$. Note that $h^{p,q}_{\rm BC}(X(t))$ and $h^{p,q}_{\rm A}(X(t))$ are semi-continuous functions of $t \in B$ where $X(t)= \pi^{-1}(t)$ \cite{Schw}. Denote
the $ \tilde\Delta_{\rm BC}$ operator and the $ \tilde\Delta_{\rm A}$ on $X(t)$ by $ \tilde\Delta_{BC,t}$ and $ \tilde\Delta_{A,t}$. From the prove of the semi-continuity of $h^{p,q}_{\rm BC}(X(t))$ ($h^{p,q}_{\rm A}(X(t))$) on \cite{Schw}, we can see that $h^{p,q}_{\rm BC}(X(t))$ ( $h^{p,q}_{\rm A}(X(t))$ )does not jump at the point $t=0$ if and only if all the $ \tilde\Delta_{BC,0}$($ \tilde\Delta_{A,0}$)-harmonic forms on $X$ can be extended
to relative $ \tilde\Delta_{BC,t}$($ \tilde\Delta_{A,t}$)-harmonic forms on a neighborhood of $0 \in B$ which is real analytic in the direction of $B$, since the $ \tilde\Delta_{BC,t}$($ \tilde\Delta_{A,t}$) varies real analytic on $B$.
The above condition is equivalent to the following: all the cohomology classes $[\theta]$ in $H^{p,q}_{\rm BC}(X)$($H^{p,q}_{\rm A}(X)$) can be extended to a relative $d_t-closed$( $\del_t\delbar_t-closed$) forms $\theta(t)$  such that $[\theta(t)]\neq 0$ on a neighborhood of $0 \in B$ which is real analytic on the direction of $B$.
Therefore in order to study the jumping phenomenon, we need to study the extension obstructions. So we need to study the obstructions of the extension of the cohomology classes in $\HH^{\bullet}(X,\sB_{p,q}^{\bullet})$ to a relative cohomology classes in $\HH^{\bullet}(X_n,\sB_{p,q;X_n/B_n}^{\bullet})$. 
 We denote the following complex 
 $$ \pi^{-1}(m_0^{\omega}/(m_0^{\omega})^{n+1}) \stackrel{(+,-)}{\to} \mathcal{M}_{0}^{\omega} / (\mathcal{M}^{\omega} _{0})^{n+1}\otimes \sO^{\omega}_{X_n} \oplus \bar{\mathcal{M}}_{0}^{\omega} / (\bar{\mathcal{M}}^{\omega} _{0})^{n+1}\otimes \sO^{\omega}_{\bar{X}_n} $$$$\to \mathcal{M}_{0}^{\omega} / (\mathcal{M}^{\omega} _{0})^{n+1}\otimes \Omega_{X_n/B_n}^{1;\omega}\oplus \bar{\mathcal{M}}_{0}^{\omega} / (\bar{\mathcal{M}}^{\omega} _{0})^{n+1}\otimes \bar{\Omega}_{X_n/B_n}^{1;\omega}$$$$\to\ldots \mathcal{M}_{0}^{\omega} / (\mathcal{M}^{\omega} _{0})^{n+1}\otimes\Omega_{X_n/B_n}^{p-1;\omega}\oplus \bar{\mathcal{M}}_{0}^{\omega} / (\bar{\mathcal{M}}^{\omega} _{0})^{n+1}\otimes \bar{\Omega}_{X_n/B_n}^{p-1;\omega}$$
$$\to \bar{\mathcal{M}}_{0}^{\omega} / (\bar{\mathcal{M}}^{\omega} _{0})^{n+1}\otimes \bar{\Omega}_{X_n/B_n}^{p;\omega}\to\ldots\to \bar{\mathcal{M}}_{0}^{\omega} / (\bar{\mathcal{M}}^{\omega} _{0})^{n+1}\otimes\bar{\Omega}_{X_n/B_n}^{q-1;\omega}\to 0,$$
 
 by $\mathcal{M}_{0}^{\omega} / (\mathcal{M}^{\omega} _{0})^{n+1}\otimes \sB_{p,q;X_{n-1}/B_{n-1}}^{\bullet}$

 Now we consider the following exact sequences:
$$ 0 \rightarrow \mathcal{M}_{0} / \mathcal{M}^{n+1}_{0}\otimes \sB_{p,q;X_{n-1}/B_{n-1}}^{\bullet}
\rightarrow \sB_{p,q;X_{n}/B_{n}}^{\bullet} \rightarrow
\sB_{p,q;X_{0}/B_{0}}^{\bullet}\rightarrow 0$$ which induces a long
exact
sequence\\
$$ 0 \rightarrow \HH^0(X_{n},\mathcal{M}_{0} / \mathcal{M}^{n+1}_{0}\otimes
\sB_{p,q;X_{n-1}/B_{n-1}}^{\bullet})\rightarrow \HH^0(X_{n},
\sB_{p,q;X_{n}/B_{n}}^{\bullet})\rightarrow
\HH^0(X,\sB_{p,q;X_{0}/B_{0}}^{\bullet})
$$
$$ \rightarrow \HH^1(X_{n},\mathcal{M}_{0} / \mathcal{M}^{n+1}_{0}\otimes
\sB_{p,q;X_{n-1}/B_{n-1}}^{\bullet}) \rightarrow ... .$$ Let $[\theta]$ be a cohomology class in $\HH^l(X,\sB_{p,q;X_{0}/B_{0}}^{\bullet})$.
The obstruction for the extension of
$[\theta]$ to a relative cohomology classes in $\HH^l(X_{n},\sB_{p,q;X_{n}/B_{n}}^{\bullet})$
comes from the non trivial image of the
connecting homomorphism
$\delta^{*}:\HH^l(X,\sB_{p,q;X_{0}/B_{0}}^{\bullet})\rightarrow
\HH^{l+1}(X_{n},\mathcal{M}_{0} / \mathcal{M}^{n+1}_{0}\otimes
\sB_{p,q;X_{n-1}/B_{n-1}}^{\bullet})$. We denote this obstruction by $o_n([\theta])$.
On the other hand, for a given real direction $\frac{\del}{\del x}$ on $B$, if there exits $n \in \N$, such that $o_i([\theta])=0, \forall i \leq n$
and $o_n([\theta]) \neq 0. $ Let $\theta_{n-1}$ be a $n-1$ th order extension of $\theta$ to a relative cohomology classes in $\HH^l(X_{n-1},\sB_{p,q;X_{n-1}/B_{n-1}}^{\bullet})$. $\dcechg\theta_{n-1}=0$ up to order $n-1$.
Now if we consider $\dcechg\theta_{n-1}/x^n$, it is easy to check that $\dcechg\theta_{n-1}/x^n$ is an extension
of a non trivial  cohomology classes $[\dcechg\theta_{n-1}/x^n(0)]$ in $\HH^l(X,\sB_{p,q})$ while $[\dcechg\theta_{n-1}/x^n(x_0)]$ is trivial  in $X(x_0)$ as a cohomology classes in $\HH^l(X(x_0),\sB_{p,q;x_0})$ if $x_0 \neq 0$. From the above discussion, we have the following theorem:
\begin{theorem}
 \label{main0}
Let $\pi: \mathcal{X}\to B$ be a small deformation of the central
fibre compact complex manifold $X$. Now we consider $\dim \HH^l(X(t),\sB_{p,q;t}^{\bullet})$  as a function
 of $t\in B$. It jumps at $t=0$ if there exists an element
 $[\theta]$ either in $\HH^l(X,\sB_{p,q}^{\bullet})$  or in $\HH^{l-1}(X,
\sB_{p,q}) $ and a minimal natural number $n\geq 1$ such that
the n-th order obstruction
 $$ o_{n}([\theta])\neq 0.$$

\end{theorem}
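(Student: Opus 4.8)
The plan is to run the argument through Schweitzer's Hodge theory for the fourth-order elliptic self-adjoint operators $\tilde\Delta_{\rm BC,t}$ and $\tilde\Delta_{\rm A,t}$, whose kernels on $X(t)$ realise $\HH^l(X(t),\sB_{p,q;t}^\bullet)$ (for suitable $l$, either a Bott--Chern group $H^{p,q}_{\rm BC}(X(t))$ or an Aeppli group $H^{p-1,q-1}_{\rm A}(X(t))$). Since these operators vary real-analytically with $t\in B\subset\C$, the function $d(t):=\dim\HH^l(X(t),\sB_{p,q;t}^\bullet)$ is upper semicontinuous, is constant on a punctured neighbourhood of $0$, and --- this is the mechanism recalled just before the statement --- fails to jump at $0$ exactly when the orthogonal projection $P_t$ onto $\ker\tilde\Delta_{\sharp,t}$ is real-analytic across $t=0$; equivalently, when every class of $\HH^l(X,\sB_{p,q}^\bullet)$ extends to a real-analytic family of nonzero relative classes. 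I would therefore reduce the theorem to producing, from an obstructed element, a central class that obstructs this extension, treating the two cases of the statement separately.

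First I would dispose of an obstructed $[\theta]\in\HH^l(X,\sB_{p,q}^\bullet)$. The hypothesis $o_i([\theta])=0$ for $i<n$ and $o_n([\theta])=\delta^{*}([\theta])\neq 0$ says precisely that $[\theta]$ lifts to a relative class $[\theta_{n-1}]\in\HH^l(X_{n-1},\sB^\bullet_{p,q;X_{n-1}/B_{n-1}})$ but not one order further. Were $d$ constant near $0$, the harmonic representative of $[\theta]$ would extend to a real-analytic family in $\ker\tilde\Delta_{\sharp,t}$, hence to a relative class of every finite order, forcing all $o_i([\theta])$ to vanish; this contradicts $o_n([\theta])\neq 0$. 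As $\delta^{*}$ is linear, $[\theta]\neq 0$, so this is a genuine central class, and $d$ must jump at $0$.

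The substantive case is an obstructed $[\theta]\in\HH^{l-1}(X,\sB_{p,q}^\bullet)$, where the jump has to be exhibited one degree higher. Let $x$ be the coordinate on $B$ and let $\theta_{n-1}$ be an $(n-1)$-st order extension; minimality of $n$ gives $\dcechg\theta_{n-1}\equiv 0\pmod{x^n}$, so $\psi:=\dcechg\theta_{n-1}/x^n$ is a well-defined real-analytic family of degree-$l$ hypercochains with $\dcechg\psi=0$. Two points carry the argument. On the one hand, unwinding the snake lemma for the short exact sequence of the previous section identifies $[\psi(0)]\in\HH^l(X,\sB_{p,q}^\bullet)$ with the central-fibre representative of $o_n([\theta])$, so $[\psi(0)]\neq 0$. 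On the other hand, for $x_0\neq 0$ one has $\psi(x_0)=\dcechg\bigl(\theta_{n-1}(x_0)/x_0^n\bigr)$, whence $[\psi(x_0)]=0$ in $\HH^l(X(x_0),\sB_{p,q;x_0}^\bullet)$. Thus $\psi$ is a real-analytic family of cocycles, nontrivial in cohomology at $0$ and trivial at every nearby fibre. If $d$ did not jump at $0$, real-analyticity of $P_t$ would give $P_0\psi(0)=\lim_{x_0\to 0}P_{x_0}\psi(x_0)=0$, contradicting $[\psi(0)]\neq 0$; hence $d$ jumps at $0$.

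I expect the main obstacle to be the second case, and within it two technical checks: first, the snake-lemma identification of $[\psi(0)]$ with $o_n([\theta])$, i.e. that the abstract connecting map is computed concretely by the operation ``apply $\dcechg$ and divide by $x^n$'' followed by restriction to the central fibre; and second, the analytic input that under the no-jump hypothesis the harmonic projections $P_t$ depend real-analytically on $t$, which is what licenses passing to the limit $x_0\to 0$. The remaining verifications --- that $\psi$ is a well-defined family of cocycles and that $\psi(x_0)$ is a coboundary for $x_0\neq 0$ --- are the routine manipulations indicated above.
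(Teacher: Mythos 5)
Your proposal follows the same route as the paper: reduce the no-jump condition, via Schweitzer's Hodge theory for the real-analytic family of elliptic operators $\tilde\Delta_{{\rm BC},t}$, $\tilde\Delta_{{\rm A},t}$, to extendability of central classes to all finite orders (disposing of the $\HH^{l}$ case), and in the $\HH^{l-1}$ case form $\dcechg\theta_{n-1}/x^{n}$, which restricts to the nonzero class $o_n([\theta])$ at $t=0$ but is a coboundary on every nearby fibre, contradicting real-analyticity of the harmonic projections. This is exactly the argument the paper gives in the discussion preceding the theorem, with your write-up only making explicit the limit $P_0\psi(0)=\lim_{x_0\to0}P_{x_0}\psi(x_0)$ that the paper leaves implicit.
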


\section{The Formula for the Obstructions}\label{section4}
\renewcommand{\theequation}
{3.\arabic{equation}} \setcounter{equation}{-1}
Since these obstructions we discussed in the previous section are so important when we consider the
problem of jumping phenomenon of Bott-Chern cohomology and Aeppli cohomology, we try to find out an
explicit calculation for such obstructions in this section. As we had done in \cite{ye}, we need some preparation.
 Cover $X$ by open sets $U_{i}$ such that, for
arbitrary $i$, $U_{i}$ is small enough. More precisely,
$U_{i}$ is stein and the following exact sequence splits\\
$$ 0 \rightarrow \pi_{n}^{*}(\Omega_{B_{n}})^{\omega}(U_{i})
\rightarrow \Omega_{X_{n}} ^{\omega}(U_{i})\rightarrow
\Omega_{X_{n}/B_{n}}^{\omega}(U_{i})\rightarrow 0;$$
$$ 0 \rightarrow \bar{\pi}_{n}^{*}(\Omega_{\bar{B}_{n}})^{\omega}(U_{i})
\rightarrow \Omega_{\bar{X}_{n}}^{\omega} (U_{i})\rightarrow
\bar{\Omega}_{X_{n}/B_{n}}^{\omega}(U_{i})\rightarrow 0.$$ So we have a map
$\varphi_{i}:  \Omega_{X_{n}/B_{n}}^{\omega} (U_{i}) \oplus \bar{\Omega}_{X_{n}/B_{n}} ^{\omega}(U_{i})\rightarrow
\Omega_{X_{n}}^{\omega} (U_{i})\oplus \Omega_{\bar{X}_{n}}^{\omega}(U_{i})$, such that $\varphi_{i}|_{\Omega_{X_{n}/B_{n}} ^{\omega} (U_{i})}(
\Omega_{X_{n}/B_{n}}^{\omega} (U_{i}))\oplus
\pi_{n}^{*}(\Omega_{B_{n}})^{\omega}(U_{i}) \cong \Omega_{X_{n}} ^{\omega}(U_{i}) $ and $\varphi_{i}|_{\bar{\Omega}_{X_{n}/B_{n}}^{\omega} (U_{i})}(
\bar{\Omega}_{X_{n}/B_{n}} ^{\omega}(U_{i}))\oplus
\bar{\pi}_{n}^{*}(\Omega_{\bar{B}_{n}})^{\omega}(U_{i}) \cong \Omega_{\bar{X}_{n}} ^{\omega}(U_{i}) $.
This decomposition determines a local decomposition of the
exterior differentiation $\partial_{X_{n}}$($\delbar_{X_{n}}$) in
$\Omega^{\bullet;\omega}_{X_{n}}$ (resp. $\bar{\Omega}^{\bullet;\omega}_{X_{n}}$)on each $U_{i}$\\
$$ \partial_{X_{n}}=\del^{i}_{B_{n}}+\del^{i}_{X_{n}/B_{n}} (resp. \del_{\bar{X}_{n}}=\delbar^{i}_{{B}_{n}}+\delbar^{i}_{X_{n}/B_{n}}).$$
By definition, $\del_{X_{n}/B_{n}}$ and $\delbar_{X_{n}/B_{n}}$ are given by $ \varphi_{i}^{-1}
\circ \del^{i}_{X_{n}/B_{n}}\circ\varphi_{i}$ and $ \varphi_{i}^{-1}
\circ \delbar^{i}_{X_{n}/B_{n}}\circ\varphi_{i}$. \\
\indent Denote the set of alternating $q$-cochains $\beta$ with
values in $\mathcal{F}$ by
$\check{C}^{q}(\mathbf{U},\mathcal{F})$, i.e. to each
$q+1$-tuple, $i_{0}<i_{1}...< i_{q}$, $\beta$ assigns a section
$\beta(i_{0},i_{1},..., i_{q})$ of $\mathcal{F}$ over
$U_{i_{0}}\cap U_{i_{1}} \cap ... \cap U_{i_{q}}$. \vskip 0.1cm
Let us still using $\varphi_{i}$ denote the following map,
\begin{eqnarray*}
\varphi_{i}: \pi_{n}^{*}(\Omega_{B_{n}})^{\omega} \wedge
\Omega^{p;\omega}_{X_{n}/B_{n}}(U_{i}) \oplus \bar{\pi}_{n}^{*}(\Omega_{\bar{B}_{n}})^{\omega} \wedge \bar{\Omega}^{p;\omega}_{X_{n}/B_{n}}(U_{i} ) & \rightarrow &
\Omega^{p+1;\omega}_{X_{n}}(U_{i}) \oplus \Omega^{p+1;\omega}_{\bar{X}_{n}}(U_{i})\\
\varphi_{i}(\omega_{1}\wedge\beta_{i_{1}}\wedge...\wedge\beta_{i_{p}}+\omega_{2}\wedge{\beta^{'}}_{j_{1}}\wedge...\wedge{\beta^{'}}_{j_{p}})
 & = &
\omega_{1}\wedge\varphi_{i}(\beta_{i_{1}})\wedge...\wedge\varphi_{i}(\beta_{i_{p}}) \\
&&+\omega_{2}\wedge\varphi_{i}({\beta^{'}}_{j_{1}})\wedge...\wedge\varphi_{i}({\beta^{'}}_{j_{p}}).
\end{eqnarray*}

Define
$\varphi:\check{C}^{q}(\mathbf{U},\pi_{n}^{*}(\Omega_{B_{n}})^{\omega} \wedge
\Omega^{p;\omega}_{X_{n}/B_{n}} \oplus \bar{\pi}_{n}^{*}(\Omega_{\bar{B}_{n}})^{\omega} \wedge \bar{\Omega}^{p;\omega}_{X_{n}/B_{n}})
\rightarrow \check{C}^{q}(\mathbf{U},\Omega^{p+1;\omega}_{{X}_{n}} \oplus \Omega^{p+1;\omega}_{\bar{X}_{n}})$ by\\
$$ \varphi(\beta)(i_{0},i_{1},..., i_{q})=\varphi_{i_{0}}(\beta(i_{0},i_{1},...,
i_{q})) \qquad \forall \beta \in
\check{C}^{q}(\mathbf{U},\pi_{n}^{*}(\Omega_{B_{n}})^{\omega} \wedge
\Omega^{p;\omega}_{X_{n}/B_{n}} \oplus \bar{\pi}_{n}^{*}(\Omega_{\bar{B}_{n}})^{\omega} \wedge \bar{\Omega}^{p;\omega}_{X_{n}/B_{n}}),$$ where $i_{0}<i_{1}...< i_{q}$.\vskip 0.1cm Define the
total Lie derivative with respect to $B_{n}$\\
$$ L_{B_{n}}: \check{C}^{q}(\mathbf{U},\Omega^{p;\omega}_{X_{n}}\oplus {\Omega}^{p;\omega}_{\bar{X}_{n}})
\rightarrow \check{C}^{q}(\mathbf{U},\Omega^{p+1;\omega}_{X_{n}}\oplus {\Omega}^{p+1;\omega}_{\bar{X}_{n}})$$ by
$$ L_{B_{n}}(\beta)(i_{0},i_{1},...,
i_{q})=\del_{B_{n}}^{i_{0}}(\beta(i_{0},i_{1},..., i_{q})) \qquad
\forall \beta \in
\check{C}^{q}(\mathbf{U},\Omega^{p;\omega}_{X_{n}}),$$  where
$i_{0}<i_{1}...< i_{q}$ (see \cite{Ye7}).\vskip 0.1cm
 Define, for each $U_{i}$ the
total interior product with respect to $B_{n}$, $$I^{i}:
\Omega^{p;\omega}_{X_{n}}(U_{i}) \oplus \Omega^{p;\omega}_{\bar{X}_{n}}(U_{i})\rightarrow \Omega^{p;\omega}_{X_{n}}(U_{i}) \oplus \Omega^{p;\omega}_{\bar{X}_{n}}(U_{i})$$
by
$$ I^{i}(\mu_1 \del_{X_n}g_{1}\wedge \del_{X_n}g_{2}\wedge...\wedge \del_{X_n}g_{p}+\mu_2 \del_{\bar{X}_n}{g^{'}}_{1}\wedge \del_{\bar{X}_n}g^{'}_{2}\wedge...\wedge \del_{\bar{X}_n}g^{'}_{p})= $$
$$
\mu_1 \sum_{j=1}^{p}\del_{X_n}g_{1}\wedge...\wedge \del_{X_n}g_{j-1}\wedge \del^{i}_{B_{n}}(g_{j})\wedge \del_{X_n}g_{j+1}\wedge...\wedge \del_{X_n}g_{p}+ $$ $$\mu_2 \sum_{j=1}^{p}\del_{\bar{X}_n}g^{'}_{1}\wedge...\wedge \del_{\bar{X}_n}g^{'}_{j-1}\wedge \del^{i}_{B_{n}}(g^{'}_{j})\wedge \del_{\bar{X}_n}g^{'}_{j+1}\wedge...\wedge \del_{\bar{X}_n}g^{'}_{p}.$$
When $p=0$, we put $I^{i}=0$ (see \cite{Ye7}).\vskip 0.1cm Define
$\lambda: \check{C}^{q}(\mathbf{U},\Omega^{p;\omega}_{X_{n}} \oplus \Omega^{p;\omega}_{\bar{X}_{n}})
\rightarrow \check{C}^{q+1}(\mathbf{U},\Omega^{p;\omega}_{X_{n}}\oplus \Omega^{p;\omega}_{\bar{X}_{n}}) $ by
$$ (\lambda
\beta)(i_{0},...,i_{q+1})=(I^{i_{0}}-I^{i_{1}})\beta(i_{1},...,i_{q+1}) \qquad \forall \beta \in \check{C}^{q}(\mathbf{U},\Omega^{p;\omega}_{X_{n}}\oplus \Omega^{p;\omega}_{\bar{X}_{n}}).$$

An we have the following lemma, the prove is completely the same as lemma 3.1 in \cite{ye}:
\begin{lemma}
$$\lambda \circ \varphi \equiv \delta \circ \varphi - \varphi \circ
\delta.$$ 
\end{lemma}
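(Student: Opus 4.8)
The plan is to prove the stated operator identity by evaluating both sides on an arbitrary cochain $\beta$ and an arbitrary $(q+2)$-tuple $i_0<i_1<\cdots<i_{q+1}$, and then to reduce the cochain-level statement to a purely local identity between the splitting maps $\varphi_{i_0},\varphi_{i_1}$ and the interior-product operators $I^{i_0},I^{i_1}$ over $U_{i_0}\cap U_{i_1}$. The structural feature that makes this work is that $\varphi$ is defined using only the \emph{first} index of a tuple, so the \v Cech differential $\delta$ interacts with it in a controlled, almost telescoping, way.

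First I would compute $(\delta\circ\varphi-\varphi\circ\delta)(\beta)$ on $(i_0,\ldots,i_{q+1})$. In $\delta(\varphi\beta)$, the term in which $i_0$ is omitted is evaluated by $\varphi_{i_1}$ (the new first index), while every term retaining $i_0$ is evaluated by $\varphi_{i_0}$; in $\varphi(\delta\beta)$ the outer map is always $\varphi_{i_0}$. Hence every contribution in which $i_0$ survives appears identically in both expressions and cancels in the difference, leaving
$$(\delta\circ\varphi-\varphi\circ\delta)(\beta)(i_0,\ldots,i_{q+1})=(\varphi_{i_1}-\varphi_{i_0})\bigl(\beta(i_1,\ldots,i_{q+1})\bigr).$$
On the other side, directly from the definition of $\lambda$,
$$(\lambda\circ\varphi)(\beta)(i_0,\ldots,i_{q+1})=(I^{i_0}-I^{i_1})\,\varphi_{i_1}\bigl(\beta(i_1,\ldots,i_{q+1})\bigr).$$
Thus the entire lemma reduces to the local identity
$$(I^{i_0}-I^{i_1})\circ\varphi_{i_1}=\varphi_{i_1}-\varphi_{i_0}$$
as maps of sections over $U_{i_0}\cap U_{i_1}$.

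To establish this local identity I would first treat a relative $1$-form. From the splitting $\del_{X_n}=\del^{i}_{B_n}+\del^{i}_{X_n/B_n}$ one gets $\varphi_i(\del_{X_n/B_n}g)=\del_{X_n}g-\del^{i}_{B_n}g$, whereas $I^i$ extracts the base component, so $I^i(\del_{X_n}g)=\del^{i}_{B_n}g$ while $I^i$ fixes base forms $\del^{j}_{B_n}g$ (being combinations of $\del_{X_n}t_\alpha$, which $I^i$ sends to themselves). Substituting then yields $(I^{i_0}-I^{i_1})\varphi_{i_1}(\del_{X_n/B_n}g)=\del^{i_0}_{B_n}g-\del^{i_1}_{B_n}g=\varphi_{i_1}(\del_{X_n/B_n}g)-\varphi_{i_0}(\del_{X_n/B_n}g)$, as required; the $\delbar$-component and the base factor $\omega_1\in\pi_n^*(\Omega_{B_n})^\omega$ are handled identically, the latter being fixed by every $\varphi_i$ and every $I^i$ so that it cancels out of the difference. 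For a general decomposable $p$-form I would use that $\varphi_i$ is multiplicative on wedge products while $I^i$ is a degree-zero derivation for the wedge product, replacing one factor at a time, and expand both sides, collecting terms according to how many factors carry a base differential.

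The main obstacle will be the bookkeeping in this general-$p$ step: after expanding the homomorphism $\varphi_{i_1}$ and the derivation $(I^{i_0}-I^{i_1})$, one must check that all contributions carrying two or more base differentials cancel, so that only the single-base-factor terms survive and reassemble into $\varphi_{i_1}-\varphi_{i_0}$. One must also verify that $I^i$ is well defined on all of $\Omega^{p;\omega}_{X_n}$, independently of the chosen generators, and that every manipulation above is read modulo $(\mathcal{M}^\omega_0)^{n+1}$, i.e.\ at the correct $n$th infinitesimal order. Since this multilinear computation is exactly the one carried out for Lemma~3.1 in \cite{ye}, I would invoke that argument verbatim for the wedge-product part rather than reproduce it in full.
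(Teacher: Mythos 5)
Your argument is correct and is essentially the same computation the paper intends: the paper offers no proof beyond citing Lemma~3.1 of \cite{ye}, and your \v Cech bookkeeping (reduction to the local identity $(I^{i_0}-I^{i_1})\circ\varphi_{i_1}=\varphi_{i_1}-\varphi_{i_0}$ on $U_{i_0}\cap U_{i_1}$, verified on one-forms via the splitting $\del_{X_n}=\del^{i}_{B_n}+\del^{i}_{X_n/B_n}$ and extended by the derivation/multiplicativity properties) is exactly that argument. The one point to sharpen is that in the wedge-product step the contributions carrying two or more base differentials need not cancel --- the symbol $\equiv$ in the statement means congruence modulo such terms (they are killed anyway once one passes to $\pi_{n-1}^{*}(\Omega_{B_{n}|B_{n-1}})\wedge\Omega^{\bullet;\omega}_{X_{n-1}/B_{n-1}}$ with a one-dimensional base) --- which is precisely the bookkeeping you correctly defer to \cite{ye}.
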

With the above preparation, we are ready to  study the jumping phenomenon of
the dimensions of Bott-Chern or Aeppli cohomology groups, for arbitrary $[\theta]$ belongs to
$\HH^l(X,\sB_{p,q})$, suppose we can extend $[\theta]$ to
order $n-1$ in $\HH^l(X_{n-1},\sB_{p,q;X_{n-1}/B_{n-1}}^{\bullet}) $. Denote
such element by $[\theta_{n-1}]$. In the following, we try to find
out the obstruction of the extension of $[\theta_{n-1}]$ to $n$th
order. Consider the exact sequence\\
$$ 0 \rightarrow \mathcal{M}^{n}_{0} / \mathcal{M}^{n+1}_{0}\otimes \sB_{p,q;X_{0}/B_{0}}^{\bullet}
\rightarrow \sB_{p,q;X_{n}/B_{n}}^{\bullet} \rightarrow
\sB_{p,q;X_{n-1}/B_{n-1}}^{\bullet}\rightarrow 0$$ which induces a long
exact
sequence\\
$$ 0 \rightarrow \HH^0(X_n,\mathcal{M}^{n}_{0} / \mathcal{M}^{n+1}_{0}\otimes
\sB_{p,q;X_{0}/B_{0}}^{\bullet})\rightarrow \HH^0(X_{n},
\sB_{p,q;X_{n}/B_{n}}^{\bullet})\rightarrow
\HH^0(X_{n-1},\sB_{p,q;X_{n-1}/B_{n-1}}^{\bullet})
$$
$$ \rightarrow \HH^1(X_n,\mathcal{M}^{n}_{0} / \mathcal{M}^{n+1}_{0}\otimes
\sB_{p,q;X_{0}/B_{0}}^{\bullet}) \rightarrow ... .$$ Let $[\theta]$ be a cohomology class in $\HH^l(X,\sB_{p,q;X_{0}/B_{0}}^{\bullet})$.

 The obstruction for
$[\theta_{n-1}]$ comes from the non trivial image of the
connecting homomorphism
$\delta^{*}: \HH^l(X_{n-1},\sB_{p,q;X_{n-1}/B_{n-1}}^{\bullet}) \rightarrow \HH^{l+1}(X_n,\mathcal{M}^{n}_{0} / \mathcal{M}^{n+1}_{0}\otimes
\sB_{p,q;X_{0}/B_{0}}^{\bullet})
$. \\
Now we are ready to calculate the formula for the obstructions.
Let $\tilde{\theta}$ be an element of
$\check{C}^{l}(\mathbf{U},\sB_{p,q;X_{n}/B_{n}}^{\bullet})$ such that
its quotient image in
$\check{C}^{l}(\mathbf{U},\sB_{p,q;X_{n-1}/B_{n-1}}^{\bullet})$ is
$\theta_{n-1}$. Then $\delta^{*}([\theta_{n-1}])$=
$[\dcechg (\tilde{\theta})]$ which is an element of
$$\HH^{l+1}(X_n,\mathcal{M}_{0}^{n} / \mathcal{M}^{n+1}_{0}\otimes
\sB_{p,q;X_{0}/B_{0}}^{\bullet}) \cong \mathrm{m}_{0}^{n} /
\mathrm{m}^{n+1}_{0} \otimes
\HH^{l+1}(X,\sB_{p,q;X_{0}/B_{0}}^{\bullet}).$$  Denote
$r_{X_{n}}$ the restriction to the space $X_{n}^{\omega}$(topological space $X$ with structure sheaf $\sO_{X_{n}}^{\omega}$)
and
denote the following complex
 $$ \pi^{-1}(\Omega_{B_{n}|B_{n-1}}^{\omega}) \stackrel{(+,-)}{\to} \pi_{n-1}^{*}(\Omega_{B_{n}|B_{n-1}})^{\omega} \otimes \sO^{\omega}_{X_n} \oplus \bar{\pi}_{n-1}^{*}(\bar{\Omega}_{B_{n}|B_{n-1}})^{\omega} \otimes \sO^{\omega}_{\bar{X}_n} $$$$\to  \pi_{n-1}^{*}(\Omega_{B_{n}|B_{n-1}})^{\omega} \wedge   \Omega_{X_n/B_n}^{1;\omega}\oplus \bar{\pi}_{n-1}^{*}(\bar{\Omega}_{B_{n}|B_{n-1}})^{\omega} \wedge \bar{\Omega}_{X_n/B_n}^{1;\omega}$$$$\to \ldots \pi_{n-1}^{*}(\Omega_{B_{n}|B_{n-1}})^{\omega} \wedge  \Omega_{X_n/B_n}^{p-1;\omega}\oplus  \bar{\pi}_{n-1}^{*}(\bar{\Omega}_{B_{n}|B_{n-1}})^{\omega} \wedge \bar{\Omega}_{X_n/B_n}^{p-1;\omega}$$
$$\to  \bar{\pi}_{n-1}^{*}(\bar{\Omega}_{B_{n}|B_{n-1}})^{\omega} \wedge  \bar{\Omega}_{X_n/B_n}^{p;\omega}\to\ldots\to \bar{\pi}_{n-1}^{*}(\bar{\Omega}_{B_{n}|B_{n-1}})^{\omega} \wedge\bar{\Omega}_{X_n/B_n}^{q-1;\omega}\to 0,$$
by
$\pi_{n-1}^{*}(\Omega_{B_{n}|B_{n-1}})\wedge\sB_{p,q;X_{n-1}/B_{n-1}}^{\bullet.}$

In order
to give the obstructions an explicit calculation, we need to
consider the following map $$ \rho:
\HH^{l}(X_n,\mathcal{M}^{n}_{0}/\mathcal{M}^{n+1}_{0}\otimes
\sB_{p,q;X_{0}/B_{0}}^{\bullet}) \rightarrow
\HH^{l}(X_{n-1},\pi_{n-1}^{*}(\Omega_{B_{n}|B_{n-1}})\wedge\sB_{p,q;X_{n-1}/B_{n-1}}^{\bullet})
$$ which is defined by $\rho[\sigma]=[ \varphi^{-1} \circ
r_{X_{n-1}}\circ (L_{B_{n}}+L_{\bar{B}_{n}})\circ \varphi (\sigma)]$, where $\varphi^{-1}$ is the quotient maps:  $\check{C}^{\bullet}(\mathbf{U}, \pi_{n-1}^{*}(\Omega_{B_{n}|B_{n-1}})^{\omega} \wedge(\Omega^{p;\omega}_{X_{n}|X_{n-1}} \oplus {\Omega}^{p;\omega}_{\bar {X}_{n}|\bar{X}_{n-1}} ))
\rightarrow \check{C}^{\bullet}(\mathbf{U}, \pi_{n-1}^{*}(\Omega_{B_{n}|B_{n-1}}) ^{\omega}\wedge(\Omega^{p;\omega}_{X_{n-1}/B_{n-1}} \oplus \bar{\Omega}^{p;\omega}_{X_{n-1}/B_{n-1}} ) ).$
An we have the following lemmas, the proof is completely the same as lemma 3.2 and lemma 3.3 in \cite{ye}:
\begin{lemma}
The map $\rho
$ is well defined.
\end{lemma}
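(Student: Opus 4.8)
The plan is to verify the two properties that let a cochain-level operator descend to hypercohomology: that
$$D:=\varphi^{-1}\circ r_{X_{n-1}}\circ(L_{B_{n}}+L_{\bar{B}_{n}})\circ\varphi$$
carries hypercocycles of $\check{C}^{l}(\mathbf{U},\mathcal{M}^{n}_{0}/\mathcal{M}^{n+1}_{0}\otimes\sB_{p,q;X_{0}/B_{0}}^{\bullet})$ to hypercocycles of the target complex, and that it carries hypercoboundaries to hypercoboundaries. Both statements reduce to the single assertion that $D$ commutes, up to terms annihilated by the restriction $r_{X_{n-1}}$, with the total differential $\dcechg$; the first property is the $\sigma$-closed case and the second is the $\sigma=\dcechg\tau$ case of the same identity. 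I would organize the commutation computation by splitting $\dcechg$ into its \v Cech part $\check{\delta}$ and the internal differential of $\sB^{\bullet}_{p,q;X_{n}/B_{n}}$ (assembled from $\del_{X_{n}/B_{n}}$, $\delbar_{X_{n}/B_{n}}$ and the leading $(+,-)$ map), and treat the two pieces separately.

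First I would handle the internal differential. On each $U_{i}$ the absolute operator splits as $\del_{X_{n}}=\del^{i}_{B_{n}}+\del^{i}_{X_{n}/B_{n}}$, and dually $\delbar_{X_{n}}=\delbar^{i}_{B_{n}}+\delbar^{i}_{X_{n}/B_{n}}$; since $\del_{X_{n}}^{2}=0$ and $\delbar_{X_{n}}^{2}=0$ the base and fibre pieces anticommute and each squares to zero modulo the base direction. Pushing this decomposition through $\varphi$, which by construction intertwines the relative and absolute complexes, shows that $L_{B_{n}}+L_{\bar{B}_{n}}$ commutes with the internal differential up to a commutator supported on $\pi^{*}(\Omega_{B_{n}})^{\omega}$. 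After projecting back by $\varphi^{-1}$ this commutator is exactly what produces the twisting factor $\pi_{n-1}^{*}(\Omega_{B_{n}|B_{n-1}})$; this is the structural reason the target is $\pi_{n-1}^{*}(\Omega_{B_{n}|B_{n-1}})\wedge\sB_{p,q;X_{n-1}/B_{n-1}}^{\bullet}$ rather than the untwisted complex.

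Next I would treat the \v Cech differential, where the decisive input is the preceding Lemma, $\lambda\circ\varphi\equiv\check{\delta}\circ\varphi-\varphi\circ\check{\delta}$. Rewriting $\check{\delta}\circ\varphi=\varphi\circ\check{\delta}+\lambda\circ\varphi$ lets me commute $\check{\delta}$ past $\varphi$ at the cost of the term $\lambda\circ\varphi$, whose value on a cochain is the difference of total interior products $(I^{i_{0}}-I^{i_{1}})$ in the base direction; these measure the discrepancy between the local splittings $\varphi_{i}$ and $\varphi_{j}$, and such a discrepancy carries an extra factor from the base ideal. The crucial point is then that after applying $r_{X_{n-1}}$, i.e. reducing modulo $(\mathcal{M}^{\omega}_{0})^{n}$ on $X_{n-1}$, these $\lambda$-corrections either vanish outright or become $\dcechg$-exact, so that they do not affect the hypercohomology class. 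Combining this with the internal-differential computation yields $\dcechg\circ D=\pm\,D\circ\dcechg$ on cocycles, which gives both required properties at once; this is the verbatim analogue of Lemmas 3.2 and 3.3 of \cite{ye}.

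The main obstacle I expect is the sign-and-filtration bookkeeping for the mixed complex $\sB_{p,q}^{\bullet}$. One must keep track of the leading edge $\C\stackrel{(+,-)}{\to}\sO\oplus\bar{\sO}$ together with the place where the complex changes shape from $\Omega^{\bullet}\oplus\bar{\Omega}^{\bullet}$ to the purely anti-holomorphic tail $\bar{\Omega}^{\bullet}$, and at each spot verify that $L_{B_{n}}+L_{\bar{B}_{n}}$, followed by reduction modulo $(\mathcal{M}^{\omega}_{0})^{n}$ and by $\varphi^{-1}$, lands in the correct graded piece of $\pi_{n-1}^{*}(\Omega_{B_{n}|B_{n-1}})\wedge\sB_{p,q;X_{n-1}/B_{n-1}}^{\bullet}$. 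Once the components are aligned, the genuinely nontrivial step is the vanishing of the $\lambda$-corrections after restriction, and this is precisely where the order-$n$ truncation enters.
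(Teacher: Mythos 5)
Your overall strategy is the right one, and it is the approach the paper intends: the paper gives no argument of its own for this lemma, deferring to Lemma 3.2 of \cite{ye}, and that proof is exactly the Katz--Oda-style computation you describe --- split $\dcechg$ into its \v Cech and internal parts, use $\lambda\circ\varphi\equiv\check{\delta}\circ\varphi-\varphi\circ\check{\delta}$ to move $\check{\delta}$ past $\varphi$, use the local splittings $\del_{X_{n}}=\del^{i}_{B_{n}}+\del^{i}_{X_{n}/B_{n}}$ for the internal differential, and check that the correction terms do not survive into the target complex. Reducing both cocycle-preservation and coboundary-preservation to a single commutation identity is also fine, provided you prove that identity on general cochains and not only on cocycles.

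There is, however, one genuine gap: you attribute the disappearance of the correction term $(L_{B_{n}}+L_{\bar{B}_{n}})\circ\lambda\circ\varphi$ to the restriction $r_{X_{n-1}}$, ``i.e.\ reducing modulo $(\mathcal{M}^{\omega}_{0})^{n}$'', and you say this is ``precisely where the order-$n$ truncation enters''. That is not the mechanism, and the truncation alone does not kill these terms: $\lambda$ is built from the interior products $I^{i_{0}}-I^{i_{1}}$, which do not raise the power of $\mathcal{M}_{0}$ in the coefficients, while $L_{B_{n}}$ can lower it (it differentiates coefficients in the base direction), so $L_{B_{n}}\circ\lambda$ produces terms whose coefficients lie only in $\mathcal{M}_{0}^{n-1}$ and which therefore survive $r_{X_{n-1}}$. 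What actually kills them is that $I^{i}$ and $L_{B_{n}}$ each introduce one wedge factor pulled back from the base, so $L_{B_{n}}\circ(I^{i_{0}}-I^{i_{1}})$ lands in forms carrying two base differentials; these are annihilated by the quotient map $\varphi^{-1}$, whose target $\pi_{n-1}^{*}(\Omega_{B_{n}|B_{n-1}})\wedge\Omega^{\bullet;\omega}_{X_{n-1}/B_{n-1}}$ retains only one base factor (equivalently, they vanish because $B$ is one-dimensional). The same remark applies to the $L_{B_{n}}\circ L_{B_{n}}$ term implicit in your internal-differential step. You do correctly identify $\varphi^{-1}$ as a projection when discussing the internal differential, so the repair is only to route the $\lambda$-corrections through that same projection rather than through the truncation; with that change the argument closes.
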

\begin{lemma}
$\rho([\dcechg(\tilde{\theta})])$ is exactly
$o_{n}([\theta])$ in the previous section.
\end{lemma}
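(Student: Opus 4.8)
The plan is to reduce the abstract connecting-homomorphism definition of the obstruction to the explicit \v{C}ech description produced by $\rho$, exactly paralleling the Dolbeault computation carried out in \cite{ye}. First I would record that, by the definition of the connecting homomorphism of the short exact sequence $0\to\mathcal{M}_{0}^{n}/\mathcal{M}_{0}^{n+1}\otimes\sB_{p,q;X_{0}/B_{0}}^{\bullet}\to\sB_{p,q;X_{n}/B_{n}}^{\bullet}\to\sB_{p,q;X_{n-1}/B_{n-1}}^{\bullet}\to 0$, the class $[\dcechg(\tilde\theta)]$ is represented by applying the total hypercohomology differential $\dcechg$ to any lift $\tilde\theta\in\check{C}^{l}(\mathbf{U},\sB_{p,q;X_{n}/B_{n}}^{\bullet})$ of $\theta_{n-1}$. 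Since $\theta_{n-1}$ is itself an order $(n-1)$ extension of $\theta$, this same $\tilde\theta$ is a lift of $\theta$ all the way from the central fibre, so the cochain $\dcechg(\tilde\theta)$ is simultaneously the cocycle computing the obstruction $o_{n}([\theta])$ of \S3. The only substantive work is then to match the two interpretations of its coefficients: $o_{n}([\theta])$ reads $\dcechg(\tilde\theta)$ as valued in $\mathcal{M}_{0}/\mathcal{M}_{0}^{n+1}\otimes\sB_{p,q;X_{n-1}/B_{n-1}}^{\bullet}$, and because the lower obstructions vanish by hypothesis its nonzero part sits in the top graded piece $\mathcal{M}_{0}^{n}/\mathcal{M}_{0}^{n+1}$.

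Second, I would compute $\dcechg(\tilde\theta)$ locally through the splitting $\partial_{X_{n}}=\partial^{i}_{B_{n}}+\partial^{i}_{X_{n}/B_{n}}$ and its conjugate $\delbar_{X_{n}}=\delbar^{i}_{B_{n}}+\delbar^{i}_{X_{n}/B_{n}}$ introduced at the start of \S4. The total differential $\dcechg$ on $X_{n}$ differs from the relative one on $X_{n-1}$ precisely by the base-direction terms $\partial^{i}_{B_{n}}$, $\delbar^{i}_{B_{n}}$; after passing to the quotient $\mathcal{M}_{0}^{n}/\mathcal{M}_{0}^{n+1}$ these are exactly what the total Lie derivatives $L_{B_{n}}$ and $L_{\bar B_{n}}$ record. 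Concretely, I would apply $\varphi$ to embed the relative forms into genuine forms on $X_{n}$ and $\bar X_{n}$, restrict to $X_{n-1}$ by $r_{X_{n-1}}$, and read off that the base-direction component of $\varphi(\dcechg(\tilde\theta))$ equals $(L_{B_{n}}+L_{\bar B_{n}})\,\varphi(\tilde\theta)$ modulo relative coboundaries. Applying $\varphi^{-1}$ then lands the result in $\HH^{l+1}(X_{n-1},\pi_{n-1}^{*}(\Omega_{B_{n}|B_{n-1}})\wedge\sB_{p,q;X_{n-1}/B_{n-1}}^{\bullet})$, which is by definition $\rho([\dcechg(\tilde\theta)])$. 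The differentiation built into $L_{B_{n}}$, $L_{\bar B_{n}}$ is what converts the coefficient module $\mathcal{M}_{0}^{n}/\mathcal{M}_{0}^{n+1}$ of functions into the module $\pi_{n-1}^{*}(\Omega_{B_{n}|B_{n-1}})$ of base one-forms, so that $\rho$ is precisely the bridge identifying $o_{n}([\theta])$ with the incremental obstruction.

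To justify that these manipulations descend to cohomology rather than merely to cochains, I would invoke the lemma $\lambda\circ\varphi\equiv\delta\circ\varphi-\varphi\circ\delta$ proved above, which says $\varphi$ commutes with the \v{C}ech differential up to the explicitly computable correction $\lambda$. This is what lets the \v{C}ech coboundary be moved past $\varphi$ and $\varphi^{-1}$ without disturbing the hypercohomology class, and it is also the mechanism behind the well-definedness of $\rho$ established just before. Combining the first two steps then yields $\rho([\dcechg(\tilde\theta)])=o_{n}([\theta])$, verbatim the argument of Lemma 3.2 and Lemma 3.3 in \cite{ye}. I expect the main obstacle to be the bookkeeping in the second step: one must track carefully how $\partial_{X_{n}/B_{n}}$ relates to $\partial_{X_{n-1}/B_{n-1}}$ under the quotient $X_{n}\to X_{n-1}$, and verify that every correction term arising from the noncommutativity of $\varphi$ with the differentials either cancels or contributes exactly the Lie-derivative piece, with all signs and graded pieces aligned so that the output genuinely represents $o_{n}([\theta])$ under $\HH^{l+1}(X_{n},\mathcal{M}_{0}^{n}/\mathcal{M}_{0}^{n+1}\otimes\sB_{p,q;X_{0}/B_{0}}^{\bullet})\cong m_{0}^{n}/m_{0}^{n+1}\otimes\HH^{l+1}(X,\sB_{p,q;X_{0}/B_{0}}^{\bullet})$. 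This is delicate but entirely parallel to the Dolbeault case already treated in \cite{ye}.
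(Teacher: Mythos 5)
Your proposal is correct and follows essentially the same route the paper intends: the paper gives no written argument for this lemma, deferring verbatim to Lemmas 3.2 and 3.3 of \cite{ye}, and your reconstruction — identifying $\dcechg(\tilde\theta)$ as the connecting-homomorphism cocycle for both exact sequences, using the local splitting $\partial_{X_n}=\partial^i_{B_n}+\partial^i_{X_n/B_n}$ so that $L_{B_n}+L_{\bar B_n}$ converts the coefficient module $\mathcal{M}_0^n/\mathcal{M}_0^{n+1}$ into $\pi_{n-1}^{*}(\Omega_{B_n|B_{n-1}})$, and controlling the passage to cohomology via $\lambda\circ\varphi\equiv\delta\circ\varphi-\varphi\circ\delta$ — is exactly that argument transposed to the complex $\sB^{\bullet}_{p,q}$.
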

Now consider the
following exact sequence. The connecting homomorphism  of the
associated long exact sequence gives the Kodaira-Spencer
 class of order $n$ \cite{Ye4} [1.3.2],\\
\begin{align}
  0\rightarrow \pi_{n-1}^{*}(\Omega_{B_{n}|B_{n-1}})^{\omega}\rightarrow
 \Omega_{X_{n}|X_{n-1}}^{\omega}\rightarrow
 \Omega_{X_{n-1}/B_{n-1}}^{\omega}\rightarrow 0. \label{exactsq1}
\end{align}
By wedge the above exact sequence with
$\Omega^{p-1;\omega}_{X_{n-1}/B_{n-1}}$, we get a new exact sequence. The
connecting homomorphism of such exact sequence gives us a map from
$H^{q}(X_{n-1},\Omega^{p;\omega}_{X_{n-1}/B_{n-1}})$ to
$H^{q+1}(X_{n-1},\pi^{*}(\Omega_{B_{n}|B_{n-1}})^{\omega}\wedge\Omega^{p-1;\omega}_{X_{n-1}/B_{n-1}})$.
Denote such map by $\kappa_{n}\llcorner$, for such map is simply
the inner product with the Kodaira-Spencer
 class of order $n$.
With the above preparation, we are ready to proof the main
theorem of this section.
\begin{theorem} \label{main}
Let $\pi:\mathcal{X}\rightarrow B$ be a deformation of
$\pi^{-1}(0)=X$, where $X$ is a compact complex manifold. Let
$\pi_{n}:X_{n}\rightarrow B_{n}$ be the $n$th order deformation of
$X$. For arbitrary $[\theta]$ belongs to $\HH^l(X,\sB_{p,q}^{\bullet})$,
suppose we can extend $[\theta]$ to order $n-1$ in
$H^l(X_{n-1},\sB_{p,q;X_{n-1}/B_{n-1}}^{\bullet})$. Denote such element by
$[\theta_{n-1}]$. The obstruction of the extension of $[\theta]$
to $n$th order is given by:
$$ o_{n}([\theta])=-\partial^{\bar{\partial},\sB}_{X_{n-1}/B_{n-1}} \circ \kappa_{n} \llcorner \circ \partial^{\sB,\bar{\partial}}_{X_{n-1}/B_{n-1}}([\theta_{n-1}])- \bar{\partial}^{{\partial},\sB}_{X_{n-1}/B_{n-1}} \circ \bar{\kappa}_{n} \llcorner \circ \bar{\partial}^{\sB,{\partial}}_{X_{n-1}/B_{n-1}}([\theta_{n-1}]),$$
where $\kappa_{n}$ is the $n$th order Kodaira-Spencer class and $\bar{\kappa}_{n}$  is
the $n$th order Kodaira-Spencer class of the deformation $ \bar{\pi}: \bar{\sX} \rightarrow \bar{B}$.
$\partial^{\bar{\partial},\sB}_{X_{n-1}/B_{n-1}}$, $\bar{\partial}^{{\partial},\sB}_{X_{n-1}/B_{n-1}}$ , $\partial^{\sB,\bar{\partial}}_{X_{n-1}/B_{n-1}}$ and $\bar{\partial}^{\sB,{\partial}}_{X_{n-1}/B_{n-1}}$
are the  maps defined in $\S2$.
\end{theorem}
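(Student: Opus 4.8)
The plan is to reduce the computation of $o_n([\theta])$ to the explicit cochain-level formula for $\rho$ and then to extract the two Kodaira--Spencer contributions by means of the commutation identity $\lambda\circ\varphi\equiv\delta\circ\varphi-\varphi\circ\delta$. First I would invoke the preceding lemma, which identifies $o_n([\theta])$ with $\rho([\dcechg(\tilde\theta)])$, where $\tilde\theta\in\check{C}^l(\mathbf{U},\sB_{p,q;X_n/B_n}^{\bullet})$ is any lift of the order-$(n-1)$ extension $\theta_{n-1}$. Writing $\theta_{n-1}$ through its \v Cech hypercocycle data $(\theta_c;\theta_u^{r,0};\theta_v^{0,s})$ and unwinding the definition $\rho[\sigma]=[\varphi^{-1}\circ r_{X_{n-1}}\circ(L_{B_n}+L_{\bar B_n})\circ\varphi(\sigma)]$, the whole problem becomes the evaluation of $\varphi^{-1}\circ r_{X_{n-1}}\circ(L_{B_n}+L_{\bar B_n})\circ\varphi(\dcechg\tilde\theta)$ at the cochain level.

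Next I would analyze $\dcechg\tilde\theta$. Since $\theta_{n-1}$ is a hypercocycle over $X_{n-1}$, the cochain $\dcechg\tilde\theta$ vanishes modulo $\mathcal{M}_0^n$, so only its order-$n$ part survives, and that part records the failure of the hypercocycle relations caused by replacing the relative differentials $\partial_{X_{n-1}/B_{n-1}},\bar\partial_{X_{n-1}/B_{n-1}}$ by $\partial_{X_n/B_n},\bar\partial_{X_n/B_n}$. Because $\sB_{p,q}^{\bullet}$ has a holomorphic strand ($\Omega^{\bullet}$, differential $\partial$) and an antiholomorphic strand ($\bar\Omega^{\bullet}$, differential $\bar\partial$) that meet only at the bottom edge $\C\to\sO\oplus\bar\sO$, this order-$n$ part splits into a $\partial$-contribution built from the $\theta_u$-strand and a $\bar\partial$-contribution built from the $\theta_v$-strand, and the two may be treated independently. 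Here I would use Lemma~\ref{lemma4} and Lemma~\ref{lemma5}: modulo hypercoboundaries, the pieces $\partial_{X_n/B_n}(\tilde\theta-\tilde\theta_u^{p-1,0})$ and $\bar\partial_{X_n/B_n}(\tilde\theta-\tilde\theta_v^{0,q-1})$ drop out, so only the top components $\theta_u^{p-1,0}$ and $\theta_v^{0,q-1}$ enter. These are precisely the inputs $\partial^{\sB,\bar\partial}_{X_{n-1}/B_{n-1}}([\theta_{n-1}])=[\partial_{X_{n-1}/B_{n-1}}\theta_u^{p-1,0}]$ and $\bar\partial^{\sB,\partial}_{X_{n-1}/B_{n-1}}([\theta_{n-1}])=[\bar\partial_{X_{n-1}/B_{n-1}}\theta_v^{0,q-1}]$.

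The core step is the identification of the Kodaira--Spencer contraction. After applying $\varphi$, I would use the local splitting $\partial_{X_n}=\partial^i_{B_n}+\partial^i_{X_n/B_n}$ to move $\varphi$ past the relative differential, and then apply $\lambda\circ\varphi\equiv\delta\circ\varphi-\varphi\circ\delta$ to trade the \v Cech differential $\check{\delta}$ hidden inside $\dcechg$ for the operator $\lambda$, which is assembled from the total interior products $I^{i}$. The composite of the Lie derivative $L_{B_n}$ (differentiation along the base) with the $I^{i}$-contraction, followed by $r_{X_{n-1}}$ and $\varphi^{-1}$, is exactly the inner product $\kappa_n\llcorner$ with the $n$th order Kodaira--Spencer class, since $\kappa_n$ is defined as the connecting homomorphism of the exact sequence~\eqref{exactsq1} wedged with $\Omega^{p-1;\omega}_{X_{n-1}/B_{n-1}}$. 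This turns the $\partial$-strand contribution into $-\partial^{\bar\partial,\sB}_{X_{n-1}/B_{n-1}}\circ\kappa_n\llcorner\circ\partial^{\sB,\bar\partial}_{X_{n-1}/B_{n-1}}([\theta_{n-1}])$, the outer $\partial^{\bar\partial,\sB}$ reflecting that the resulting $(p-1)$-form is planted back as the top holomorphic component of a hypercocycle. Running the conjugate computation with $L_{\bar B_n}$, $\bar\kappa_n$, and the $\theta_v$-strand yields the second term $-\bar\partial^{\partial,\sB}_{X_{n-1}/B_{n-1}}\circ\bar\kappa_n\llcorner\circ\bar\partial^{\sB,\partial}_{X_{n-1}/B_{n-1}}([\theta_{n-1}])$, and adding the two gives the stated formula.

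I expect the main obstacle to be the careful bookkeeping in this core step: tracking the signs produced by the total differential $\dcechg$ and by $\lambda\circ\varphi\equiv\delta\circ\varphi-\varphi\circ\delta$ (these are the source of the two minus signs), and verifying rigorously that the $L_{B_n}$-plus-$I^{i}$ composite is \emph{literally} the connecting homomorphism defining $\kappa_n\llcorner$, not merely a cohomologous representative. A secondary technical point is to justify the reductions of Lemma~\ref{lemma4} and Lemma~\ref{lemma5} at the level of cochains modulo coboundaries, so that only $\theta_u^{p-1,0}$ and $\theta_v^{0,q-1}$ survive; once these identifications are in place, the remaining manipulations are formally the same as in the Dolbeault case treated in~\cite{ye}.
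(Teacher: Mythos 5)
Your proposal follows the paper's own proof essentially step for step: you identify $o_n([\theta])$ with $\rho([\dcechg(\tilde\theta)])$ via the preceding lemma, use the commutation identity $\lambda\circ\varphi\equiv\delta\circ\varphi-\varphi\circ\delta$ together with the (anti)commutation of the total differential with $\dcechg$ to move the Lie derivative past the coboundary, reduce to the top components $\theta_u^{p-1,0}$ and $\theta_v^{0,q-1}$ by Lemmas \ref{lemma4} and \ref{lemma5}, and identify the surviving terms with the two Kodaira--Spencer contractions via the definitions of the edge maps and Lemma 3.4 of \cite{ye}. This is the same decomposition and the same key lemmas as the paper's argument, with only the sign bookkeeping (which you correctly flag) left to carry out.
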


 \begin{proof}
Note that $o_{n}([\theta]) = \rho \circ \delta(\tilde{\theta}) = [\varphi^{-1} \circ r_{X_{n-1}}\circ (L_{B_{n}}+L_{\bar{B}_{n}} )\circ \varphi \circ \delta(\tilde{\theta})]  .$ Because $ (L_{B_{n}}+L_{\bar{B}_{n}}+\del_{X_{n}/B_{n}}+\delbar_{X_{n}/B_{n}} )\circ \dcechg = - \dcechg \circ (L_{B_{n}}+L_{\bar{B}_{n}}+\del_{X_{n}/B_{n}}+\delbar_{X_{n}/B_{n}} ).$
\begin{eqnarray*}
r_{X_{n-1}}\circ (L_{B_{n}}+L_{\bar{B}_{n}} ) \circ \varphi \circ
\dcechg(\tilde{\theta}) & \equiv & r_{X_{n-1}} \circ (L_{B_{n}}+L_{\bar{B}_{n}} )
\circ (\dcechg \circ \varphi - \lambda \circ
\varphi)(\tilde{\theta})  \\
& \equiv & r_{X_{n-1}} \circ (L_{B_{n}}+L_{\bar{B}_{n}} ) \circ \dcechg  \circ \varphi
(\tilde{\theta})  \\
& \equiv & -r_{X_{n-1}} \circ (\del^{\bullet}_{X_{n}/B_{n}} \circ
\dcechg + \dcechg  \circ \del^{\bullet}_{X_{n}/B_{n}}+ \\
&& \delbar^{\bullet}_{X_{n}/B_{n}} \circ
\dcechg + \dcechg  \circ \delbar^{\bullet}_{X_{n}/B_{n}} +\dcechg  \circ
(L_{B_{n}}+L_{\bar{B}_{n}} ) ) \circ
\varphi (\tilde{\theta}) \\
& \equiv & -r_{X_{n-1}} \circ (\del^{\bullet}_{X_{n}/B_{n}} \circ
\dcechg + \dcechg \circ \del^{\bullet}_{X_{n}/B_{n}} + \delbar^{\bullet}_{X_{n}/B_{n}} \\&&\circ
\dcechg + \dcechg \circ \delbar^{\bullet}_{X_{n}/B_{n}}) \circ \varphi
(\tilde{\theta})
-  \dcechg \circ r_{X_{n-1}}\circ (L_{B_{n}}+L_{\bar{B}_{n}} )\circ
\varphi (\tilde{\theta}). \\
\end{eqnarray*}

\noindent Therefore
\begin{eqnarray*}
[\varphi^{-1} \circ r_{X_{n-1}}\circ (L_{B_{n}}+L_{\bar{B}_{n}} )\circ \varphi \circ
\delta(\tilde{\theta})] & = & [-\varphi^{-1} \circ r_{X_{n-1}} \circ (\del^{\bullet}_{X_{n}/B_{n}} \circ
\dcechg + \dcechg \circ \del^{\bullet}_{X_{n}/B_{n}}\\&& + \delbar^{\bullet}_{X_{n}/B_{n}} \circ
\dcechg + \dcechg \circ \delbar^{\bullet}_{X_{n}/B_{n}}) \circ \varphi
(\tilde{\theta}) ]\\
& = & -[ \del_{X_{n-1}/B_{n-1}} \circ \varphi^{-1} \circ r_{X_{n-1}} \circ \dcechg \circ \varphi
(\tilde{\theta}) +\\&& \delbar_{X_{n-1}/B_{n-1}} \circ\varphi^{-1} \circ  r_{X_{n-1}} \circ \dcechg \circ \varphi
(\tilde{\theta}) \\
&& + \varphi^{-1} \circ r_{X_{n-1}} \circ \dcechg \circ \varphi (\widetilde{\del_{X_{n-1}/B_{n-1}}(\theta_{n-1})}) + \\&& \varphi^{-1} \circ r_{X_{n-1}} \circ \dcechg \circ \varphi (\widetilde{\delbar_{X_{n-1}/B_{n-1}}(\theta_{n-1})})\\
\end{eqnarray*}
Since $(\varphi^{-1} \circ  r_{X_{n-1}} \circ \dcechg \circ \varphi(\tilde{\theta}))^{p-1,0}_u=0$ and $(\varphi^{-1} \circ r_{X_{n-1}} \circ \dcechg \circ \varphi (\tilde{\theta}))^{0,q-1}_v=0$, by lemma \ref{lemma4} and lemma \ref{lemma5}, we know that $[\del_{X_{n-1}/B_{n-1}} \circ \varphi^{-1} \circ r_{X_{n-1}} \circ \dcechg \circ \varphi(\tilde{\theta})]=0$ and $[\delbar_{X_{n-1}/B_{n-1}} \circ\varphi^{-1} \circ  r_{X_{n-1}} \circ \dcechg \circ \varphi(\tilde{\theta})]=0$.
And from lemma \ref{lemma4} and lemma \ref{lemma5}, we also know that
\begin{eqnarray*}
[\varphi^{-1} \circ r_{X_{n-1}} \circ \dcechg \circ \varphi (\widetilde{\del_{X_{n-1}/B_{n-1}}(\theta_{n-1})})]
&=& [\varphi^{-1} \circ r_{X_{n-1}} \circ \dcechg \circ \varphi (\widetilde{\del_{X_{n-1}/B_{n-1}}(\theta_{n-1} -\theta_{n-1;u}^{p-1,0})} \\
&&+\del_{X_{n-1}/B_{n-1}}\theta_{n-1;u}^{p-1,0})]\\
&=& [\varphi^{-1} \circ r_{X_{n-1}} \circ \dcechg \circ \varphi (\widetilde{\del_{X_{n-1}/B_{n-1}}\theta_{n-1;u}^{p-1,0}})]
\end{eqnarray*}
and
\begin{eqnarray*}
[\varphi^{-1} \circ r_{X_{n-1}} \circ \dcechg \circ \varphi (\widetilde{\delbar_{X_{n-1}/B_{n-1}}(\theta_{n-1})})]
&=& [\varphi^{-1} \circ r_{X_{n-1}} \circ \dcechg \circ \varphi (\widetilde{\delbar_{X_{n-1}/B_{n-1}}(\theta_{n-1}-\theta_{n-1;v}^{0,q-1})}\\ && + \delbar_{X_{n-1}/B_{n-1}}\theta_{n-1;v}^{0,q-1})]\\
&=& [\varphi^{-1} \circ r_{X_{n-1}} \circ \dcechg \circ \varphi (\widetilde{\delbar_{X_{n-1}/B_{n-1}}\theta_{n-1;v}^{0,q-1}})]
\end{eqnarray*}
By the definition of the maps: $\partial^{\bar{\partial},\sB}_{X_{n-1}/B_{n-1}}$ , $\partial^{\sB,\bar{\partial}}_{X_{n-1}/B_{n-1}}$ and Lemma 3.4 in \cite{ye}, we have
\begin{eqnarray*}
[\varphi^{-1} \circ r_{X_{n-1}} \circ \dcechg \circ \varphi (\widetilde{\del_{X_{n-1}/B_{n-1}}\theta_{n-1;u}^{p-1,0}})]
={\partial}^{\bar{\partial},\sB}_{X_{n-1}/B_{n-1}} \circ{\kappa}_{n} \llcorner \circ {\partial}^{\sB,\bar{\partial}}_{X_{n-1}/B_{n-1}}([\theta_{n-1}])
\end{eqnarray*}
and similarly, we have
\begin{eqnarray*}
[\varphi^{-1} \circ r_{X_{n-1}} \circ \dcechg \circ \varphi (\widetilde{\delbar_{X_{n-1}/B_{n-1}}\theta_{n-1;v}^{0,q-1}})]
= \bar{\partial}^{{\partial},\sB}_{X_{n-1}/B_{n-1}} \circ \bar{\kappa}_{n} \llcorner \circ \bar{\partial}^{\sB,{\partial}}_{X_{n-1}/B_{n-1}}([\theta_{n-1}])
\end{eqnarray*}
So we have:
\begin{eqnarray*}
[\varphi^{-1} \circ r_{X_{n-1}}\circ (L_{B_{n}}+L_{\bar{B}_{n}} ) \circ \varphi \circ
\delta(\tilde{\theta})]
 = -{\partial}^{\bar{\partial},\sB}_{X_{n-1}/B_{n-1}} \circ{\kappa}_{n}\llcorner \circ {\partial}^{\sB,\bar{\partial}}_{X_{n-1}/B_{n-1}}([\theta_{n-1}])\\-\bar{\partial}^{{\partial},\sB}_{X_{n-1}/B_{n-1}} \circ \bar{\kappa}_{n} \llcorner \circ \bar{\partial}^{\sB,{\partial}}_{X_{n-1}/B_{n-1}}([\theta_{n-1}]).
\end{eqnarray*}
\end{proof}
Apply the above theorem and theorem \ref{main0} to study the jumping phenomenon of the dimensions of
Bott-Chern(Aeppli) cohomology groups. We have the following theorems.

\begin{theorem} \label{main4}
Let $\pi:\mathcal{X}\rightarrow B$ be a deformation of
$\pi^{-1}(0)=X$, where $X$ is a compact complex manifold. Let
$\pi_{n}:X_{n}\rightarrow B_{n}$ be the $n$th order deformation of
$X$. If there exists an elements $[\theta^1]$ in $H^{p,q}_{\rm BC}(X)$ or
an elements $[\theta^2]$ in $H^{p-1,q-1}_{\rm A}(X)$ and
a minimal natural number $n \geq 1$ such that the $n$th order obstruction
$o_n([\theta^1])\neq 0$ or   $o_n([\theta^2])\neq 0$, then the $h_{p,q}^{\mathrm{\rm BC}}(X(t))$
will jump at the point $t=0$. And the formulas for the obstructions are given by:
$$ o_n([\theta^1]) = -{\partial}^{\bar{\partial},\sB}_{X_{n-1}/B_{n-1}} \circ{\kappa}_{n}\llcorner \circ r_{BC,\bar{\partial}}([\theta_{n-1}])\\-\bar{\partial}^{{\partial},\sB}_{X_{n-1}/B_{n-1}} \circ \bar{\kappa}_{n} \llcorner \circ r_{BC,{\partial}}([\theta_{n-1}]);$$
$$ o_n([\theta^2]) = - \partial^{\bar{\partial},BC}_{X_{n-1}/B_{n-1}} \circ{\kappa}_{n}\llcorner \circ \partial^{A,\bar{\partial}}_{X_{n-1}/B_{n-1}}([\theta_{n-1}])\\-\bar{\partial}^{{\partial},BC}_{X_{n-1}/B_{n-1}} \circ \bar{\kappa}_{n} \llcorner \circ \bar{\partial}^{A,{\partial}}_{X_{n-1}/B_{n-1}}([\theta_{n-1}]) .$$
 \end{theorem}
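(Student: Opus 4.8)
The plan is to read off this statement from Theorem~\ref{main0} and Theorem~\ref{main}, the one substantive point being that a Bott--Chern class at bidegree $(p,q)$ and an Aeppli class at bidegree $(p-1,q-1)$ are computed by the \emph{same} complex $\sB_{p,q}^{\bullet}$, in consecutive total degrees. First I would invoke the Schweitzer isomorphisms of $\S2.1$,
$$H^{p,q}_{\rm BC}(X)\;\cong\;\HH^{p+q}(X,\sB_{p,q}^{\bullet}),\qquad H^{p-1,q-1}_{\rm A}(X)\;\cong\;\HH^{p+q-1}(X,\sB_{p,q}^{\bullet}),$$
so that with $l:=p+q$ the class $[\theta^1]$ lies in $\HH^{l}(X,\sB_{p,q}^{\bullet})$ and $[\theta^2]$ lies in $\HH^{l-1}(X,\sB_{p,q}^{\bullet})$. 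Since $\dim\HH^{l}(X(t),\sB_{p,q;t}^{\bullet})=h^{p,q}_{\rm BC}(X(t))$, the jumping of $h^{p,q}_{\rm BC}(X(t))$ at $t=0$ is \emph{identical} to the jumping of $\dim\HH^{l}(X(t),\sB_{p,q;t}^{\bullet})$.

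Granting this dictionary, the jumping assertion is immediate: Theorem~\ref{main0} says exactly that $\dim\HH^{l}(X(t),\sB_{p,q;t}^{\bullet})$ jumps at $t=0$ as soon as there is an obstructed element (with minimal $n\geq1$ and $o_n\neq0$) in $\HH^{l}(X,\sB_{p,q}^{\bullet})$ \emph{or} in $\HH^{l-1}(X,\sB_{p,q}^{\bullet})$. The hypothesis supplies such an element, namely $[\theta^1]$ in the first case and $[\theta^2]$ in the second, so $h^{p,q}_{\rm BC}(X(t))$ jumps.

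For the explicit formulas I would apply Theorem~\ref{main} and rename its four abstract maps according to the Remark of $\S2.3$. For $[\theta^1]\in\HH^{p+q}(X,\sB_{p,q}^{\bullet})$ the general formula reads
$$o_n([\theta^1])=-\partial^{\bar{\partial},\sB}_{X_{n-1}/B_{n-1}}\circ\kappa_n\llcorner\circ\partial^{\sB,\bar{\partial}}_{X_{n-1}/B_{n-1}}([\theta_{n-1}])-\bar{\partial}^{\partial,\sB}_{X_{n-1}/B_{n-1}}\circ\bar{\kappa}_n\llcorner\circ\bar{\partial}^{\sB,\partial}_{X_{n-1}/B_{n-1}}([\theta_{n-1}]);$$
at total degree $l=p+q$ the source map $\partial^{\sB,\bar{\partial}}$ is, by the Remark, the natural map $r_{BC,\bar{\partial}}$, and its complex conjugate is $r_{BC,\partial}$, which gives the first displayed formula. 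For $[\theta^2]\in\HH^{p+q-1}(X,\sB_{p,q}^{\bullet})$ I would apply the very same Theorem~\ref{main}: its proof uses only the cochain combinatorics of $\sB_{p,q}^{\bullet}$ and the maps of $\S2.3$, never the value $l=p+q$, so it holds verbatim in total degree $l-1$. Here the source map $\partial^{\sB,\bar{\partial}}$ specializes to $\partial^{A,\bar{\partial}}$ and the target map $\partial^{\bar{\partial},\sB}$ to $\partial^{\bar{\partial},BC}$ (with conjugates $\bar{\partial}^{A,\partial}$ and $\bar{\partial}^{\partial,BC}$), yielding the second formula.

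The step demanding the most care is the index matching in the last paragraph, although it is bookkeeping rather than a genuine obstacle. One must track, for $[\theta^2]$, that $\partial^{A,\bar{\partial}}([\theta_{n-1}])$ lands in $H^{q-1}(X_{n-1},\Omega^{p;\omega}_{X_{n-1}/B_{n-1}})$, that the contraction $\kappa_n\llcorner$ raises this to $H^{q}(X_{n-1},\pi_{n-1}^{*}(\Omega_{B_n|B_{n-1}})^{\omega}\wedge\Omega^{p-1;\omega}_{X_{n-1}/B_{n-1}})$, and that $\partial^{\bar{\partial},BC}$ then returns it to $\HH^{p+q}$ of the twisted complex. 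This last fact is precisely what makes the theorem meaningful: the obstruction to deforming an Aeppli class at $(p-1,q-1)$ is valued in (a twist of) $\HH^{p+q}(X,\sB_{p,q}^{\bullet})\cong H^{p,q}_{\rm BC}(X)$, which is why its nonvanishing forces $h^{p,q}_{\rm BC}(X(t))$, rather than an Aeppli dimension, to jump. Once these identifications are checked, both formulas follow by substitution.
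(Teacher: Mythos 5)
Your proposal is correct and matches the paper's own derivation: the paper gives no separate argument for Theorem~\ref{main4}, simply stating that it follows by applying Theorem~\ref{main} and Theorem~\ref{main0}, which is exactly the dictionary you spell out ($H^{p,q}_{\rm BC}(X)\cong\HH^{p+q}(X,\sB_{p,q}^{\bullet})$, $H^{p-1,q-1}_{\rm A}(X)\cong\HH^{p+q-1}(X,\sB_{p,q}^{\bullet})$, then rename the four maps via the Remark in $\S2.3$). Your index bookkeeping for the $[\theta^2]$ case is consistent with the paper's definitions.
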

 \begin{theorem} \label{main5}
Let $\pi:\mathcal{X}\rightarrow B$ be a deformation of
$\pi^{-1}(0)=X$, where $X$ is a compact complex manifold. Let
$\pi_{n}:X_{n}\rightarrow B_{n}$ be the $n$th order deformation of
$X$. If there exists an elements $[\theta^1]$ in $H^{p,q}_{\rm A}(X)$ or
an elements $[\theta^2]$ in $\HH^{p+q}(X,\sB_{p+1,q+1}^{\bullet})$ and
a minimal natural number $n \geq 1$ such that the $n$th order obstruction
$o_n([\theta^1])\neq 0$ or   $o_n([\theta^2])\neq 0$, then the $h_{p,q}^{\mathrm{\rm BC}}(X(t))$
will jump at the point $t=0$. And the formulas for the obstructions are given by:
$$ o_n([\theta^1]) = - \partial^{\bar{\partial},BC}_{X_{n-1}/B_{n-1}} \circ{\kappa}_{n}\llcorner \circ \partial^{A,\bar{\partial}}_{X_{n-1}/B_{n-1}}([\theta_{n-1}])\\-\bar{\partial}^{{\partial},BC}_{X_{n-1}/B_{n-1}} \circ \bar{\kappa}_{n} \llcorner \circ \bar{\partial}^{A,{\partial}}_{X_{n-1}/B_{n-1}}([\theta_{n-1}]) .$$
$$ o_n([\theta^2]) = - r_{\bar{\partial},A}  \circ{\kappa}_{n} \llcorner\circ {\partial}^{\sB,\bar{\partial}}_{X_{n-1}/B_{n-1}}([\theta_{n-1}])\\-r_{{\partial},A}\circ \bar{\kappa}_{n}\llcorner \circ \bar{\partial}^{\sB,{\partial}}_{X_{n-1}/B_{n-1}}([\theta_{n-1}]) .$$
 \end{theorem}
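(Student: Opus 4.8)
The plan is to observe that both classes in the hypothesis are really classes in the hypercohomology of the \emph{single} complex $\sB^{\bullet}_{p+1,q+1}$, sitting in two adjacent degrees, and then to quote Theorem \ref{main0} and Theorem \ref{main} with $(p,q)$ replaced by $(p+1,q+1)$, translating the generic maps into the named maps of the Remark in \S2. Concretely, the Schweitzer isomorphism $H^{p,q}_{\rm A}(X)\cong\HH^{p+q+1}(X,\sB^{\bullet}_{p+1,q+1})$ recorded in \S2.1 places $[\theta^1]$ in $\HH^{p+q+1}(X,\sB^{\bullet}_{p+1,q+1})$, while $[\theta^2]$ already lives in $\HH^{p+q}(X,\sB^{\bullet}_{p+1,q+1})$. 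Thus, with $l:=p+q+1$ and the complex $\sB^{\bullet}_{p+1,q+1}$ fixed throughout, $[\theta^1]$ is a degree-$l$ class and $[\theta^2]$ a degree-$(l-1)$ class for one and the same complex.

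First I would feed this into Theorem \ref{main0} with this $l$ and this complex. Its hypothesis is exactly the existence of a class in $\HH^l(X,\sB^{\bullet}_{p+1,q+1})$ \emph{or} in $\HH^{l-1}(X,\sB^{\bullet}_{p+1,q+1})$ carrying a nonzero minimal obstruction; our two alternatives $o_n([\theta^1])\neq0$ and $o_n([\theta^2])\neq0$ are precisely the two halves of this disjunction. Theorem \ref{main0} then forces $\dim\HH^l(X(t),\sB^{\bullet}_{p+1,q+1;t})$ to jump at $t=0$, and by the same isomorphism this dimension is $h^{\rm A}_{p,q}(X(t))$---equivalently, via the duality $H^{p,q}_{\rm A}\cong H^{n-q,n-p}_{\rm BC}$ of \S2.1, the dual Bott--Chern number. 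This yields the asserted jumping.

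For the explicit formulas I would specialize Theorem \ref{main} to $\sB^{\bullet}_{p+1,q+1}$, once at $l=p+q+1$ (for $[\theta^1]$) and once at $l=p+q$ (for $[\theta^2]$), and then identify the generic maps $\partial^{\sB,\bar{\partial}}_{X_{n-1}/B_{n-1}}$ and $\partial^{\bar{\partial},\sB}_{X_{n-1}/B_{n-1}}$ of that complex with the named maps of the Remark by a degree count. For $[\theta^1]$ the outgoing map $\partial^{\sB,\bar{\partial}}_{X_{n-1}/B_{n-1}}$ leaves $\HH^{p+q+1}(\sB_{p+1,q+1})\cong H^{p,q}_{\rm A}$ and lands in $H^{q}(X_{n-1},\Omega^{p+1;\omega}_{X_{n-1}/B_{n-1}})$, so it is $\partial^{A,\bar{\partial}}_{X_{n-1}/B_{n-1}}$; after $\kappa_n\llcorner$ one sits in $H^{q+1}(\Omega^{p;\omega})$, and the incoming $\partial^{\bar{\partial},\sB}_{X_{n-1}/B_{n-1}}$ then lands in $\HH^{p+q+2}(\sB_{p+1,q+1})\cong H^{p+1,q+1}_{\rm BC}$, so it is $\partial^{\bar{\partial},BC}_{X_{n-1}/B_{n-1}}$. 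For $[\theta^2]$ the outgoing map stays the generic $\partial^{\sB,\bar{\partial}}_{X_{n-1}/B_{n-1}}$; after $\kappa_n\llcorner$ one sits in $H^{q}(\Omega^{p;\omega})$, and the incoming map now lands in $\HH^{p+q+1}(\sB_{p+1,q+1})\cong H^{p,q}_{\rm A}$, so it is $r_{\bar{\partial},A}$. The conjugate summands are obtained identically from the deformation $\bar{\pi}:\bar{\sX}\to\bar{B}$, giving the $\bar{\kappa}_n$-terms. Reading Theorem \ref{main} off with these substitutions produces the two displayed formulas verbatim.

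The main obstacle I anticipate is purely this bookkeeping of cohomological degrees, and in particular explaining why the \emph{same} structural map $\partial^{\bar{\partial},\sB}_{X_{n-1}/B_{n-1}}$ of $\sB^{\bullet}_{p+1,q+1}$ deserves the label $\partial^{\bar{\partial},BC}_{X_{n-1}/B_{n-1}}$ in the $[\theta^1]$ computation but $r_{\bar{\partial},A}$ in the $[\theta^2]$ computation. The resolution is the one-degree gap between $[\theta^1]$ and $[\theta^2]$: it makes the intermediate Dolbeault group $H^{q+1}(\Omega^{p;\omega})$ in the first case and $H^{q}(\Omega^{p;\omega})$ in the second, which propagates through the incoming $\partial^{\bar{\partial},\sB}_{X_{n-1}/B_{n-1}}$ to the obstruction groups $\HH^{p+q+2}(\sB_{p+1,q+1})\cong H^{p+1,q+1}_{\rm BC}$ and $\HH^{p+q+1}(\sB_{p+1,q+1})\cong H^{p,q}_{\rm A}$ respectively. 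Once these degree shifts are checked, no further computation is needed, since Theorems \ref{main0} and \ref{main} have already done the analytic work.
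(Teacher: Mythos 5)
Your proposal is correct and is exactly the argument the paper intends: Theorems \ref{main0} and \ref{main} are applied to the single complex $\sB^{\bullet}_{p+1,q+1}$ in degrees $l=p+q+1$ and $l-1=p+q$, and the generic maps are renamed via the Remark in \S2 by the degree count you carry out (the paper states these two corollaries without writing out the bookkeeping). Your observation that the quantity which jumps is $\dim\HH^{p+q+1}(X(t),\sB^{\bullet}_{p+1,q+1;t})\cong h^{p,q}_{\rm A}(X(t))$ is also the right reading of the statement.
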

 By these theorems, we can get the following corollaries immediately.
 \begin{corollary} \label{ap1}
Let $\pi:\mathcal{X} \rightarrow B$ be a deformation of
$\pi^{-1}(0)=X$, where $X$ is a compact complex manifold. Suppose
that up to order $n$, the maps $r_{BC,\bar{\partial}}: H^{p,q}_{\rm BC}(X_n/B_n) \rightarrow
H^q(X_n,\Omega_{X_n/B_n}^{p;\omega}) $ and $r_{BC,{\partial}}: H^{p,q}_{\rm BC}(X_n/B_n) \rightarrow
H^p(\bar{X}_n,\bar{\Omega}_{X_n/B_n}^{q;\omega}) $ is 0. For arbitrary $[\theta]$ that belongs to
$ H^{p,q}_{\rm BC}(X) $, it can be extended to order $n+1$ in  $H^{p,q}_{\rm BC}(X_{n+1}/B_{n+1}).$
\end{corollary}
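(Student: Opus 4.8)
The plan is to argue by induction on the order of the extension, feeding the obstruction formula of Theorem \ref{main4} into the connecting-homomorphism criterion of Section \ref{section3}. Writing $H^{p,q}_{\rm BC}(X)=H^{p,q}_{\rm BC}(X_0/B_0)$, I would show that the $k$-th order obstruction $o_k([\theta])$ vanishes for every $1\le k\le n+1$; the vanishing of each obstruction then produces, via the triviality of the image of $\delta^*$, a lift of $[\theta]$ one order higher, and iterating yields the desired class in $H^{p,q}_{\rm BC}(X_{n+1}/B_{n+1})$.

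For the inductive step, assume $[\theta]$ has been extended to $[\theta_{k-1}]\in H^{p,q}_{\rm BC}(X_{k-1}/B_{k-1})$ for some $1\le k\le n+1$, the base case $k=1$ being the given class on $X_0$. By Theorem \ref{main4} the obstruction to extending one further order is
$$ o_k([\theta]) = -\partial^{\bar{\partial},\sB}_{X_{k-1}/B_{k-1}}\circ\kappa_k\llcorner\circ r_{BC,\bar{\partial}}([\theta_{k-1}]) - \bar{\partial}^{{\partial},\sB}_{X_{k-1}/B_{k-1}}\circ\bar{\kappa}_k\llcorner\circ r_{BC,{\partial}}([\theta_{k-1}]). $$
The key point is that both summands factor through the maps $r_{BC,\bar{\partial}}$ and $r_{BC,{\partial}}$ applied to an element of $H^{p,q}_{\rm BC}(X_{k-1}/B_{k-1})$. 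Since $k-1\le n$, the hypothesis that these maps vanish up to order $n$ gives $r_{BC,\bar{\partial}}([\theta_{k-1}])=0$ and $r_{BC,{\partial}}([\theta_{k-1}])=0$, hence $o_k([\theta])=0$. Because the two maps vanish identically on the whole cohomology group, this conclusion does not depend on which intermediate lift $[\theta_{k-1}]$ was chosen, which is what allows the induction to run cleanly without tracking the ambiguity in the successive extensions.

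There is no serious analytic difficulty once Theorem \ref{main4} is available; the only thing to watch is the index bookkeeping. The obstruction at order $k$ calls on $r_{BC,\bar{\partial}}$ and $r_{BC,{\partial}}$ at order $k-1$, so extending all the way to order $n+1$ invokes these maps only at orders $0,1,\dots,n$, all of which are covered by the hypothesis. I expect this index shift --- obstruction at level $k$ governed by the vanishing at level $k-1$ --- to be the single place where a careless count could break the argument.
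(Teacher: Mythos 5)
Your proposal is correct and follows essentially the same route as the paper: induction on the order $k$, applying the obstruction formula of Theorem \ref{main4} and observing that $o_k([\theta])$ factors through $r_{BC,\bar{\partial}}([\theta_{k-1}])$ and $r_{BC,\partial}([\theta_{k-1}])$, which vanish by hypothesis since $k-1\le n$. The index bookkeeping you flag is exactly the point the paper's proof also relies on, and your handling of it matches theirs.
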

\begin{proof}
This result can be shown by induction on $k$. \\
\indent Suppose that the corollary is proved for $k-1$, then we
can extend $[\theta]$ to and element $[\theta_{k-1}]$ in
$H^{p,q}_{\rm BC}(X_{k-1}/B_{k-1}).$ By Theorem \ref{main4} ,
the obstruction for the extension of $[\theta]$ to $k$th order
comes from:
$$ o_k([\theta]) = -{\partial}^{\bar{\partial},\sB}_{X_{k-1}/B_{k-1}} \circ{\kappa}_{n}\llcorner \circ r_{BC,\bar{\partial}}([\theta_{k-1}])\\-\bar{\partial}^{{\partial},\sB}_{X_{k-1}/B_{k-1}} \circ \bar{\kappa}_{n} \llcorner \circ r_{BC,{\partial}}([\theta_{k-1}]);$$
By the assumption,
$r_{BC,\bar{\partial}}: H^{p,q}_{\rm BC}(X_{k-1}/B_{k-1}) \rightarrow
H^q(X_{k-1},\Omega_{X_{k-1}/B_{k-1}}^{p;\omega}) $ and $r_{BC,{\partial}}: H^{p,q}_{\rm BC}(X_{k-1}/B_{k-1}) \rightarrow
H^p(\bar{X}_{k-1},\bar{\Omega}_{X_{k-1}/B_{k-1}}^{q;\omega}) $ is 0, where $k\leq n+1$
. So we have
$r_{BC,{\partial}}([\theta_{k-1}])=0$ and  $r_{BC,{\bar{\partial}}}([\theta_{k-1}])=0$. So the obstruction
$o_{k}([\theta])$ is trivial which means $[\theta]$ can be
extended to $k$th order.
\end{proof}

Since we have $\partial^{A,\bar{\partial}}_{X_n/B_n}: H^{p,q}_{\rm A}(X_n/B_n) \rightarrow H^q(X_n,\Omega_{X_n/B_n}^{p+1;\omega}) $ is the composition of $\partial^{A,BC}_{X_n/B_n}: H^{p,q}_{\rm A}(X_n/B_n)
\rightarrow   H^{p+1,q}_{\rm BC}(X_n/B_n)$ and  $r_{BC,\bar{\partial}}: H^{p+1,q}_{\rm BC}(X_n/B_n) \rightarrow
H^q(X_n,\Omega_{X_n/B_n}^{p+1;\omega}). $ With the same proof of the above corollary, we have the following result and we omit the proof.
\begin{corollary} \label{ap1}
Let $\pi:\mathcal{X} \rightarrow B$ be a deformation of
$\pi^{-1}(0)=X$, where $X$ is a compact complex manifold. Suppose
that up to order $n$, the maps $r_{BC,\bar{\partial}}: H^{p+1,q}_{\rm BC}(X_n/B_n) \rightarrow
H^q(X_n,\Omega_{X_n/B_n}^{p+1;\omega}) $ and $r_{BC,{\partial}}: H^{p,q+1}_{\rm BC}(X_n/B_n) \rightarrow
H^p(\bar{X}_n,\bar{\Omega}_{X_n/B_n}^{q+1;\omega}) $ is 0. For arbitrary $[\theta]$ that belongs to
$ H^{p,q}_{\rm A}(X) $, it can be extended to order $n+1$ in  $H^{p,q}_{\rm A}(X_{n+1}/B_{n+1}).$
\end{corollary}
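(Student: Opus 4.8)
The plan is to run exactly the same induction as in the proof of the preceding Bott-Chern corollary, but now feeding the obstruction formula of Theorem~\ref{main5} for an Aeppli class and then collapsing both of its terms by means of the factorization of the Aeppli comparison maps through the Bott-Chern groups.

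First I would argue by induction on the order $k$, for $1 \le k \le n+1$. The base case is immediate, since $[\theta] \in H^{p,q}_{\rm A}(X)$ is tautologically its own extension over $X_0 = X$. For the inductive step I assume $[\theta]$ has already been extended to a relative class $[\theta_{k-1}] \in H^{p,q}_{\rm A}(X_{k-1}/B_{k-1})$, and I must show that the obstruction to passing to order $k$ vanishes. By Theorem~\ref{main5}, applied to the Aeppli class $[\theta^1]=[\theta]$, this obstruction equals
$$o_k([\theta]) = - \partial^{\bar{\partial},BC}_{X_{k-1}/B_{k-1}} \circ \kappa_k\llcorner \circ \partial^{A,\bar{\partial}}_{X_{k-1}/B_{k-1}}([\theta_{k-1}]) - \bar{\partial}^{{\partial},BC}_{X_{k-1}/B_{k-1}} \circ \bar{\kappa}_k \llcorner \circ \bar{\partial}^{A,{\partial}}_{X_{k-1}/B_{k-1}}([\theta_{k-1}]).$$

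The decisive step is to show that the two inner maps already annihilate $[\theta_{k-1}]$. Here I would invoke the factorization recorded just before the statement: $\partial^{A,\bar{\partial}}_{X_{k-1}/B_{k-1}}$ is the composite of $\partial^{A,BC}_{X_{k-1}/B_{k-1}} \colon H^{p,q}_{\rm A}(X_{k-1}/B_{k-1}) \to H^{p+1,q}_{\rm BC}(X_{k-1}/B_{k-1})$ with $r_{BC,\bar{\partial}} \colon H^{p+1,q}_{\rm BC}(X_{k-1}/B_{k-1}) \to H^q(X_{k-1},\Omega^{p+1;\omega}_{X_{k-1}/B_{k-1}})$, and by complex conjugation $\bar{\partial}^{A,{\partial}}_{X_{k-1}/B_{k-1}}$ is the composite of $\bar{\partial}^{A,BC}_{X_{k-1}/B_{k-1}} \colon H^{p,q}_{\rm A}(X_{k-1}/B_{k-1}) \to H^{p,q+1}_{\rm BC}(X_{k-1}/B_{k-1})$ with $r_{BC,{\partial}} \colon H^{p,q+1}_{\rm BC}(X_{k-1}/B_{k-1}) \to H^p(\bar{X}_{k-1},\bar{\Omega}^{q+1;\omega}_{X_{k-1}/B_{k-1}})$. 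Since $k-1 \le n$, the hypothesis guarantees that both $r_{BC,\bar{\partial}}$ on $H^{p+1,q}_{\rm BC}$ and $r_{BC,{\partial}}$ on $H^{p,q+1}_{\rm BC}$ are zero, whence $\partial^{A,\bar{\partial}}_{X_{k-1}/B_{k-1}}([\theta_{k-1}]) = 0$ and $\bar{\partial}^{A,{\partial}}_{X_{k-1}/B_{k-1}}([\theta_{k-1}]) = 0$. Substituting into the displayed formula gives $o_k([\theta]) = 0$, so the extension proceeds to order $k$; iterating up to $k = n+1$ produces the desired class in $H^{p,q}_{\rm A}(X_{n+1}/B_{n+1})$.

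I do not anticipate a genuine obstacle: everything reduces to the two factorizations together with the obstruction formula of Theorem~\ref{main5}, all of which are already in hand. The only point needing care is the index bookkeeping --- the hypothesis is phrased for the targets $H^{p+1,q}_{\rm BC}$ and $H^{p,q+1}_{\rm BC}$, and one must check that these are precisely the intermediate Bott-Chern groups through which $\partial^{A,\bar{\partial}}$ and $\bar{\partial}^{A,{\partial}}$ factor, so that the vanishing hypothesis applies with no degree shift. One should also confirm that the conjugate factorization holds with exactly the stated bidegrees, which follows from the symmetry $H^{p,q}_{\rm BC} \cong H^{q,p}_{\rm BC}$ under complex conjugation used throughout $\S2$.
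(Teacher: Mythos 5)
Your proof is correct and is exactly the argument the paper intends: it omits the proof, remarking only that one runs the same induction as in the preceding Bott--Chern corollary after factoring $\partial^{A,\bar{\partial}}$ through $\partial^{A,BC}$ and $r_{BC,\bar{\partial}}$ (and its conjugate), which is precisely what you do.
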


\section{An Example}\label{section6}
\renewcommand{\theequation}
{4.\arabic{equation}} \setcounter{equation}{-1}
In this section, we will use the formula in
previous section to study the jumping phenomenon of the dimensions of Bott-Chern cohomology groups $h_{\rm BC}^{p,q}$ and Aeppli cohomology groups $h_{\rm A}^{p,q}$ of small deformations of Iwasawa manifold. It
was Kodaira who first calculated small deformations of Iwasawa
manifold \cite{Ye2}. In the first part of this section, let us
recall his result. \vskip 0.1cm \indent Set
\begin{displaymath}
G=\left\{\left( \begin{array}{ccc} 1 & z_{2} & z_{3}\\
                            0 & 1 & z_{1}\\
                            0 & 0 & 1\\
                            \end{array} \right); z_{i}\in
                            \mathbb{C}\right\}\cong
                            \mathbb{C}^{3},
\end{displaymath}\\
\begin{displaymath}
\Gamma=\left\{\left( \begin{array}{ccc} 1 & \omega_{2} & \omega_{3}\\
                            0 & 1 & \omega_{1}\\
                            0 & 0 & 1\\
                            \end{array} \right); \omega_{i}\in
                            \mathbb{Z}+\mathbb{Z}\sqrt{-1}
                            \right\}\\.
\end{displaymath}\\
The multiplication is defined by
\begin{displaymath}
\left( \begin{array}{ccc} 1 & z_{2} & z_{3}\\
                            0 & 1 & z_{1}\\
                            0 & 0 & 1\\
                            \end{array} \right)
                            \left( \begin{array}{ccc} 1 & \omega_{2} & \omega_{3}\\
                            0 & 1 & \omega_{1}\\
                            0 & 0 & 1\\
                            \end{array} \right)=
                            \left( \begin{array}{ccc} 1 & z_{2}+\omega_{2} & z_{3}+\omega_{1}z_{2}+\omega_{3}\\
                            0 & 1 & z_{1}+\omega_{1}\\
                            0 & 0 & 1\\
                            \end{array} \right).
\end{displaymath}\\
$X=G/\Gamma$ is called Iwasawa manifold. We may consider
$X=\mathbb{C}^{3}/\Gamma$. $g\in \Gamma$ operates on
$\mathbb{C}^3$ as follows: \\
$$ z'_{1}=z_{1}+\omega_{1},   \qquad    z'_{2}=z_{2}+\omega_{2},  \qquad
z'_{3}=z_{3}+\omega_{1}z_{2}+\omega_{3},
$$\\
where $g=(\omega_{1},\omega_{2},\omega_{3})$ and $z'=z\cdot g$.
There exist holomorphic $1$-froms
$\varphi_{1},\varphi_{2},\varphi_{3}$ which are linearly
independent at every point on $X$ and are given by \\
$$\varphi_{1}=dz_{1},\qquad \varphi_{2}=dz_{2}, \qquad
\varphi_{3}=dz_{3}-z_{1}dz_{2},$$\\
so that
$$ d\varphi_{1}=d\varphi_{2}=0, \qquad
d\varphi_{3}=-\varphi_{1}\wedge\varphi_{2}.$$\\
On the other hand we have holomorphic vector fields
$\theta_{1},\theta_{2},\theta_{3}$ on $X$ given by
$$ \theta_{1}=\frac{\partial}{\partial z_{1}},\qquad \theta_{2}=
\frac{\partial}{\partial z_{2}}+z_{1}\frac{\partial}{\partial
z_{3}},\qquad \theta_{3}=\frac{\partial}{\partial z_{3}}.$$ \\
It is easily seen that \\
$$ [\theta_{1},\theta_{2}]=-[\theta_{2},\theta_{1}]=\theta_{3}, \qquad
[\theta_{1},\theta_{3}]=[\theta_{2},\theta_{3}]=0.$$ In view of
Theorem $3$ in \cite{Ye2}, $H^{1}(X,\mathcal{O}_{X})$ is spanned
by $\overline{\varphi}_{1},\overline{\varphi}_{2}$. Since $\Theta$
is isomorphic to $\mathcal{O}^{3}$, $H^{1}(X,T_{X})$ is spanned by
$\theta_{i}\overline{\varphi}_{\lambda}, i=1,2,3, \lambda=1,2$.\\
\indent Consider the small deformation of $X$ given by
$$ \psi(t)=\sum^{3}_{i=1}\sum_{\lambda=1}^{2}t_{i\lambda}\theta_{i}\overline{\varphi}_{\lambda}t-
(t_{11}t_{22}-t_{21}t_{12})\theta_{3}\overline{\varphi}_{3}t^{2}.$$ \\
We summarize the numerical characters of deformations. The
deformations are divided into the following three classes,
the classes and subclasses of this classification are characterized by the following values of the parameters(all the details can be found in \cite{angella-1}):\\
\begin{description}
 \item[class {\itshape (i)}] $t_{11}=t_{12}=t_{21}=t_{22}=0$; \\
 \item[class {\itshape (ii)}] $D\left(\tempo\right)=0$ and $\left(t_{11},\,t_{12},\,t_{21},\,t_{22}\right)\neq \left(0,\,0,\,0,\,0\right)$:
    \begin{description}
     \item[subclass {\itshape (ii.a)}] $D\left(\tempo\right)=0$ and rank $S=1$;
     \item[subclass {\itshape (ii.b)}] $D\left(\tempo\right)=0$ and rank $S=2$;\\
    \end{description}
 \item[class {\itshape (iii)}] $D\left(\tempo\right)\neq 0$:
    \begin{description}
     \item[subclass {\itshape (iii.a)}] $D\left(\tempo\right)\neq 0$ and rank $S=1$;
     \item[subclass {\itshape (iii.b)}] $D\left(\tempo\right)\neq 0$ and rank $S=2$;\\
    \end{description}
\end{description}
the matrix $S$ is defined by
$$ S \;:=\;
\left(
\begin{array}{cccc}
 \overline{\sigma_{1\bar1}} & \overline{\sigma_{2\bar2}} & \overline{\sigma_{1\bar2}} & \overline{\sigma_{2\bar1}} \\
 \sigma_{1\bar1} & \sigma_{2\bar2} & \sigma_{2\bar1} & \sigma_{1\bar2}
\end{array}
\right)
$$
where $\sigma_{1\bar1},\,\sigma_{1\bar2},\,\sigma_{2\bar1},\,\sigma_{2\bar2}\in\C$ and $\sigma_{12}\in\C$ are complex numbers depending only on $\tempo$ such that
$$ d\varphi^3_\tempo \;=:\; \sigma_{12}\,\varphi^1_{\tempo}\wedge\varphi^2_{\tempo}+\sigma_{1\bar1}\,\varphi^1
_{\tempo}\wedge\bar\varphi^1_{\tempo}+\sigma_{1\bar2}\,\varphi^1_{\tempo}\wedge\bar\varphi^2_{\tempo}+\sigma_{2\bar1}\,
\varphi^2_{\tempo}\wedge\bar\varphi^1_{\tempo}+\sigma_{2
\bar2}\,\varphi^2_{\tempo}\wedge\bar\varphi^2_{\tempo} .$$
The first order asymptotic behaviour of $\sigma_{12},\,\sigma_{1\bar1},\,\sigma_{1\bar2},\,\sigma_{2\bar1},\,\sigma_{2\bar2}$ for $\tempo$ near $0$ is the following:
\begin{equation}\label{eq:struttura-asintotica}
\left\{
\begin{array}{rcl}
\sigma_{12} &=& -1 +\opiccolo{\tempo} \\[5pt]
\sigma_{1\bar1} &=& t_{21} +\opiccolo{\tempo}  \\[5pt]
\sigma_{1\bar2} &=& t_{22} +\opiccolo{\tempo}  \\[5pt]
\sigma_{2\bar1} &=& -t_{11} +\opiccolo{\tempo} \\[5pt]
\sigma_{2\bar2} &=& -t_{12} +\opiccolo{\tempo} \\[5pt]
\end{array}
\right.
\qquad \text{ for } \qquad \tempo\in\text{ classes {\itshape (i)}, {\itshape (ii)} and {\itshape (iii)}}
\;.
\end{equation}
The following tables are given by D.Angella in \cite{angella-1}.

Dimensions of the cohomologies of the Iwasawa manifold and of its small deformations:

\begin{center}
\begin{tabular}{c|ccccc}
$\mathbf{H^\bullet_{dR}}$ & $\mathbf{b_1}$ & $\mathbf{b_2}$ & $\mathbf{b_3}$ & $\mathbf{b_4}$ & $\mathbf{b_5}$ \\[5pt]
\hline
$\mathbb{I}_3$ and {\itshape (i)}, {\itshape (ii)}, {\itshape (iii)} & $4$ & $8$ & $10$ & $8$ & $4$
\end{tabular}
\end{center}

\vspace{12pt}

\begin{center}
\begin{tabular*}{16cm}{c|cc|ccc|cccc|ccc|cc}
$\mathbf{H^{\bullet\bullet}_{\delbar}}$ & $\mathbf{h^{1,0}_{\delbar}}$ & $\mathbf{h^{0,1}_{\delbar}}$ & $\mathbf{h^{2,0}_{\delbar}}$ & $\mathbf{h^{1,1}_{\delbar}}$ & $\mathbf{h^{0,2}_{\delbar}}$ & $\mathbf{h^{3,0}_{\delbar}}$ & $\mathbf{h^{2,1}_{\delbar}}$ & $\mathbf{h^{1,2}_{\delbar}}$ & $\mathbf{h^{0,3}_{\delbar}}$ & $\mathbf{h^{3,1}_{\delbar}}$ & $\mathbf{h^{2,2}_{\delbar}}$ & $\mathbf{h^{1,3}_{\delbar}}$ & $\mathbf{h^{3,2}_{\delbar}}$ & $\mathbf{h^{2,3}_{\delbar}}$ \\[5pt]
\hline
$\I_3$ and \itshape{(i)} & $3$ & $2$ & $3$ & $6$ & $2$ & $1$ & $6$ & $6$ & $1$ & $2$ & $6$ & $3$ & $2$ & $3$ \\[5pt]
\itshape{(ii)} & $2$ & $2$ & $2$ & $5$ & $2$ & $1$ & $5$ & $5$ & $1$ & $2$ & $5$ & $2$ & $2$ & $2$ \\[5pt]
\itshape{(iii)} & $2$ & $2$ & $1$ & $5$ & $2$ & $1$ & $4$ & $4$ & $1$ & $2$ & $5$ & $1$ & $2$ & $2$
\end{tabular*}
\end{center}

\vspace{12pt}

\begin{center}
\begin{tabular*}{16cm}{c|cc|ccc|cccc|ccc|cc}
$\mathbf{H^{\bullet\bullet}_{\textrm{BC}}}$ & $\mathbf{h^{1,0}_{\textrm{BC}}}$ & $\mathbf{h^{0,1}_{\textrm{BC}}}$ & $\mathbf{h^{2,0}_{\textrm{BC}}}$ & $\mathbf{h^{1,1}_{\textrm{BC}}}$ & $\mathbf{h^{0,2}_{\textrm{BC}}}$ & $\mathbf{h^{3,0}_{\textrm{BC}}}$ & $\mathbf{h^{2,1}_{\textrm{BC}}}$ & $\mathbf{h^{1,2}_{\textrm{BC}}}$ & $\mathbf{h^{0,3}_{\textrm{BC}}}$ & $\mathbf{h^{3,1}_{\textrm{BC}}}$ & $\mathbf{h^{2,2}_{\textrm{BC}}}$ & $\mathbf{h^{1,3}_{\textrm{BC}}}$ & $\mathbf{h^{3,2}_{\textrm{BC}}}$ & $\mathbf{h^{2,3}_{\textrm{BC}}}$ \\[5pt]
\hline
$\I_3$ and \itshape{(i)} & $2$ & $2$ & $3$ & $4$ & $3$ & $1$ & $6$ & $6$ & $1$ & $2$ & $8$ & $2$ & $3$ & $3$ \\[5pt]
\itshape{(ii.a)} & $2$ & $2$ & $2$ & $4$ & $2$ & $1$ & $6$ & $6$ & $1$ & $2$ & $7$ & $2$ & $3$ & $3$ \\[5pt]
\itshape{(ii.b)} & $2$ & $2$ & $2$ & $4$ & $2$ & $1$ & $6$ & $6$ & $1$ & $2$ & $6$ & $2$ & $3$ & $3$ \\[5pt]
\itshape{(iii.a)} & $2$ & $2$ & $1$ & $4$ & $1$ & $1$ & $6$ & $6$ & $1$ & $2$ & $7$ & $2$ & $3$ & $3$ \\[5pt]
\itshape{(iii.b)} & $2$ & $2$ & $1$ & $4$ & $1$ & $1$ & $6$ & $6$ & $1$ & $2$ & $6$ & $2$ & $3$ & $3$
\end{tabular*}
\end{center}

\vspace{12pt}

\begin{center}
\begin{tabular*}{16cm}{c|cc|ccc|cccc|ccc|cc}
$\mathbf{H^{\bullet\bullet}_{\textrm{A}}}$ & $\mathbf{h^{1,0}_{\textrm{A}}}$ & $\mathbf{h^{0,1}_{\textrm{A}}}$ & $\mathbf{h^{2,0}_{\textrm{A}}}$ & $\mathbf{h^{1,1}_{\textrm{A}}}$ & $\mathbf{h^{0,2}_{\textrm{A}}}$ & $\mathbf{h^{3,0}_{\textrm{A}}}$ & $\mathbf{h^{2,1}_{\textrm{A}}}$ & $\mathbf{h^{1,2}_{\textrm{A}}}$ & $\mathbf{h^{0,3}_{\textrm{A}}}$ & $\mathbf{h^{3,1}_{\textrm{A}}}$ & $\mathbf{h^{2,2}_{\textrm{A}}}$ & $\mathbf{h^{1,3}_{\textrm{A}}}$ & $\mathbf{h^{3,2}_{\textrm{A}}}$ & $\mathbf{h^{2,3}_{\textrm{A}}}$ \\[5pt]
\hline
$\I_3$ and \itshape{(i)} & $3$ & $3$ & $2$ & $8$ & $2$ & $1$ & $6$ & $6$ & $1$ & $3$ & $4$ & $3$ & $2$ & $2$ \\[5pt]
\itshape{(ii.a)} & $3$ & $3$ & $2$ & $7$ & $2$ & $1$ & $6$ & $6$ & $1$ & $2$ & $4$ & $2$ & $2$ & $2$ \\[5pt]
\itshape{(ii.b)} & $3$ & $3$ & $2$ & $6$ & $2$ & $1$ & $6$ & $6$ & $1$ & $2$ & $4$ & $2$ & $2$ & $2$ \\[5pt]
\itshape{(iii.a)} & $3$ & $3$ & $2$ & $7$ & $2$ & $1$ & $6$ & $6$ & $1$ & $1$ & $4$ & $1$ & $2$ & $2$ \\[5pt]
\itshape{(iii.b)} & $3$ & $3$ & $2$ & $6$ & $2$ & $1$ & $6$ & $6$ & $1$ & $1$ & $4$ & $1$ & $2$ & $2$
\end{tabular*}
\end{center}



\indent From the tables above, we know that the jumping phenomenon happens in $h^{2,0}_{\rm BC}$, $h^{0,2}_{\rm BC}$ and $h^{2,2}_{\rm BC}$ of Bott-Chern cohomology and symmetrically happens in $h^{3,1}_{\rm A}$, $h^{1,3}_{\rm A}$ and $h^{1,1}_{\rm A}$ of Aeppli cohomology. Now let us explain the jumping phenomenon of the dimensions of Bott-Chern cohomology and
 Aeppli cohomology by using the obstruction formula. From $\S4$ in
\cite{angella-1}, it follows that the Bott-Chern cohomology groups
in bi-degree $(2,0),(0,2),(2,2)$ are:
\begin{eqnarray*}
H^{2,0}_{\rm BC} (X)& =
&Span_{\mathbb{C}}\{[\varphi_{1}\wedge
\varphi_{2}],[\varphi_{2}\wedge \varphi_{3}],[\varphi_{3}\wedge
\varphi_{1}]\},\\ 
H^{0,2}_{\rm BC} (X)& =
&Span_{\mathbb{C}}\{[\overline\varphi_{1}\wedge
\overline\varphi_{2}],[\overline{\varphi}_{2}\wedge
\overline{\varphi}_{3}],[\overline{\varphi}_{3}\wedge
\overline{\varphi}_{1}]\}, \\
H^{2,2}_{\rm BC} (X)& =
&Span_{\mathbb{C}}\{[{\varphi}_{2}\wedge
{\varphi}_{3}\wedge\overline{\varphi}_{1}\wedge
\overline{\varphi}_{2}], [{\varphi}_{3}\wedge
{\varphi}_{1}\wedge\overline{\varphi}_{1}\wedge
\overline{\varphi}_{2}],\\&&
[\varphi_{1}\wedge
\varphi_{2}\wedge\overline{\varphi}_{2}\wedge
\overline{\varphi}_{3}],[\varphi_{2}\wedge
\varphi_{3}\wedge\overline{\varphi}_{2}\wedge
\overline{\varphi}_{3}],
[\varphi_{3}\wedge
\varphi_{1}\wedge\overline{\varphi}_{2}\wedge
\overline{\varphi}_{3}],\\&&[\varphi_{1}\wedge
\varphi_{2}\wedge\overline{\varphi}_{3}\wedge
\overline{\varphi}_{1}],[\varphi_{2}\wedge
\varphi_{3}\wedge\overline{\varphi}_{3}\wedge
\overline{\varphi}_{1}],[\varphi_{3}\wedge
\varphi_{1}\wedge\overline{\varphi}_{3}\wedge
\overline{\varphi}_{1}]\}, \\
\end{eqnarray*}
and the Aeppli cohomology groups
in bi-degree $(3,1),(1,3),(1,1)$ are:
\begin{eqnarray*}
H^{3,1}_{\rm A} (X)& =
&Span_{\mathbb{C}}\{[\varphi_{1}\wedge
\varphi_{2}\wedge\varphi_{3}\wedge\overline{\varphi}_{1}],[\varphi_{1}\wedge \varphi_{2}\wedge\varphi_{3}\wedge\overline{\varphi}_{2}],[\varphi_{1}\wedge
\varphi_{2}\wedge\varphi_{3}\wedge\overline{\varphi}_{3}]\},\\
H^{1,3}_{\rm A} (X)& =
&Span_{\mathbb{C}}\{[\varphi_{1}\wedge\overline\varphi_{1}\wedge
\overline\varphi_{2}\wedge\overline{\varphi}_{3}],[\varphi_{2}\wedge\overline{\varphi}_{1}\wedge
\overline{\varphi}_{2}\wedge\overline{\varphi}_{3}],[\varphi_{3}\wedge\overline{\varphi}_{1}\wedge
\overline{\varphi}_{2}\wedge\overline{\varphi}_{3}]\}, \\
H^{1,1}_{\rm A} (X)& =
&Span_{\mathbb{C}}\{[\varphi_{1}\wedge
\overline\varphi_{1}],[\varphi_{1}\wedge
\overline\varphi_{2}],[\varphi_{1}\wedge
\overline\varphi_{3}],[\varphi_{2}\wedge
\overline\varphi_{1}],\\&&[\varphi_{2}\wedge
\overline\varphi_{2}],[\varphi_{2}\wedge
\overline\varphi_{3}],[\varphi_{3}\wedge
\overline\varphi_{1}],[\varphi_{3}\wedge
\overline\varphi_{2}]\}
.\end{eqnarray*}
 For example, let us first consider $h^{2,0}_{\rm BC}$, in the ii) class of
 deformation. The Kodaira-Spencer class of the this deformation is $\psi_{1}(t)=\sum^{3}_{i=1}\sum_{\lambda=1}^{2}t_{i\lambda}\theta_{i}\overline{\varphi}_{\lambda}$, and
  $\bar{\psi}_{1}(t)=\sum^{3}_{i=1}\sum_{\lambda=1}^{2}\bar{t}_{i\lambda}\bar{\theta}_{i}{\varphi}_{\lambda}$,
 with $t_{11}t_{22}-t_{21}t_{12}=0$. It is easy to check that
 $o_{1}(\varphi_{1}\wedge \varphi_{2})=-\partial(int(\psi_{1}(t))(\varphi_{1}\wedge \varphi_{2}))-\bar{\partial}(int(\bar{\psi}_{1}(t))(\varphi_{1}\wedge \varphi_{2}))=0$, $o_{1}(t_{11}\varphi_{2}\wedge
 \varphi_{3}-t_{21}\varphi_{1}\wedge \varphi_{3})=-\partial((t_{11}t_{22}-t_{21}t_{12})\varphi_{3}\wedge
 \overline{\varphi}_{2})=0$, and $o_{1}(\varphi_{2}\wedge
 \varphi_{3})=t_{21}\varphi_{1}\wedge\varphi_{2}\wedge\overline{\varphi}_{1}+
 t_{22}\varphi_{1}\wedge\varphi_{2}\wedge\overline{\varphi}_{2}$, $o_{1}(\varphi_{1}\wedge
 \varphi_{3})=t_{11}\varphi_{1}\wedge\varphi_{2}\wedge\overline{\varphi}_{1}+
 t_{12}\varphi_{1}\wedge\varphi_{2}\wedge\overline{\varphi}_{2}$.
 Therefore, we have shown that for an element of the subspace
 $Span_{\mathbb{C}}\{[\varphi_{1}\wedge \varphi_{2}],[t_{11}\varphi_{2}\wedge
 \varphi_{3}-t_{21}\varphi_{1}\wedge \varphi_{3}]\}$, the first
 order obstruction is trivial, while, since
 $(t_{11},t_{12},t_{21},t_{22})\neq (0,0,0,0)$, at least one of
 the obstruction $o_{1}(\varphi_{2}\wedge
 \varphi_{3})$, $o_{1}(\varphi_{1}\wedge
 \varphi_{3})$ is non trivial which partly explain why the Hodge
 number $h^{2,0}_{\rm BC}$ jumps from 3 to 2. For another example, let us
 consider $h^{1,1}_{\rm A}$, in the ii) class of deformation. It is easy
 to check that all the first order obstructions of the cohomology classes are trivial.
 However, if we want to study the jumping phenomenon, we also need to consider
 the obstructions come from $\HH^{2}(X,\sB_{2,2}^{\bullet})$. It is easy to check that:
 $$ \HH^{2}(X,\sB_{2,2}^{\bullet})=Span_{\mathbb{C}}\{[\varphi_{3}],[\bar{\varphi}_{3}]\}.$$
 and
 $$ o_1({\varphi_{3}})=-t_{11}\varphi_{2}\wedge
 \bar{\varphi}_{1}-t_{12}\varphi_{2}\wedge
 \bar{\varphi}_{2}+t_{21}\varphi_{1}\wedge
 \bar{\varphi}_{1}+t_{22}\varphi_{1}\wedge
 \bar{\varphi}_{2};$$
  $$ o_1({\bar{\varphi}_{3}})=-\bar{t}_{11}\bar{\varphi}_{2}\wedge
 {\varphi}_{1}-\bar{t}_{12}\bar{\varphi}_{2}\wedge
 {\varphi}_{2}+\bar{t}_{21}\bar{\varphi}_{1}\wedge
 {\varphi}_{1}+\bar{t}_{22}\bar{\varphi}_{1}\wedge
 {\varphi}_{2}.$$
 Note that the first order of $S$ is
 $$
\left(
\begin{array}{cccc}
 -\overline{t}_{21} & -\overline{t}_{12} & \overline{t}_{22}& \overline{t}_{11} \\
 -t_{21} & -t_{21} & t_{11} & t_{22}
\end{array}
\right)
$$
If rank of the first order of $S=1$, then there exists $c_1,c_2$ such that
$$ o_1(c_1{\varphi}_{3} +c_2 \bar{\varphi}_{3})  \neq 0.$$
If rank of the first order of $S=2$, then for all $c_1,c_2$
$$ o_1(c_1{\varphi}_{3} +c_2 \bar{\varphi}_{3})  = 0.$$
and  exactly these obstructions make $h^{1,1}_{\rm A}$ jumps from 8 to 7 in $(ii.a)$ and from 8 to 6
in $(ii.b)$.

In the end of the section, we want to give the following observation as an application of the formula.
\begin{proposition}\label{proposition last}
Let $X$ be an non-K$\ddot{a}$hler nilpotent complex parallelisable
manifold whose dimension is more than 2, and $\pi:
\mathcal{X}\rightarrow B$ be the versal deformation family of $X$.
Then the number $h_{\rm A}^{1,1}$ will jump in any neighborhood of
$0\in B$.
\end{proposition}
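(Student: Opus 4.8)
The plan is to apply Theorem~\ref{main5} in bidegree $(p,q)=(1,1)$ and to produce a single non-vanishing first-order obstruction. Write $X=\Gamma\backslash G$ with $G$ a simply connected complex nilpotent Lie group and $\frg$ its Lie algebra; since $X$ is non-K\"ahler, $G$ is non-abelian, so $[\frg,\frg]\neq 0$, and a non-abelian nilpotent Lie algebra has dimension at least $3$, which is where the hypothesis $\dim_\C X>2$ enters. Fix a holomorphic coframe $\varphi_1,\dots,\varphi_n$ dual to a basis $\theta_1,\dots,\theta_n$ of $\frg$, so that $\delbar\varphi_i=0$ and $d\varphi_k=\del\varphi_k=-\tfrac12\sum_{i,j}c^k_{ij}\,\varphi_i\wedge\varphi_j$ with structure constants $c^k_{ij}$. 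The closed holomorphic $1$-forms are exactly the pullbacks from the abelianisation $\frg/[\frg,\frg]$, and $[\frg,\frg]\neq 0$ guarantees the existence of a holomorphic $1$-form $\varphi$ with $\del\varphi\neq 0$.

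First I would pin down the relevant group. Running the hypercohomology spectral sequence of $\sB_{2,2}^{\bullet}\colon \C\to\sO\oplus\bar\sO\to\Omega^1\oplus\bar\Omega^1\to 0$, the closed holomorphic $1$-forms are cancelled against $H^1(X,\C)$ under $d_2$, so that $\HH^2(X,\sB_{2,2}^{\bullet})$ is spanned by the classes of the \emph{non-closed} holomorphic $1$-forms together with their conjugates; this is the mechanism already visible in the Iwasawa computation $\HH^2(X,\sB_{2,2}^{\bullet})=\mathrm{Span}_\C\{[\varphi_3],[\bar\varphi_3]\}$. In particular $\HH^2(X,\sB_{2,2}^{\bullet})\neq 0$ precisely because $[\frg,\frg]\neq 0$, and I take $[\varphi]$ with $\del\varphi\neq 0$ as the test class.

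Next I would evaluate the obstruction through Theorem~\ref{main5}. As $\varphi$ is holomorphic its anti-holomorphic part is zero, so the second summand drops out and $\partial^{\sB,\bar\partial}([\varphi])=\del\varphi\in H^0(X,\Omega^2)$. For the versal family the Kodaira-Spencer map identifies $T_0B$ with $H^1(X,T_X)\cong H^{0,1}_{\delbar}(X)\otimes\frg$, so every first-order direction $\kappa_1=\sum_{i,\lambda}t_{i\lambda}\,\theta_i\otimes\bar\varphi_\lambda$ is realised, the $\bar\varphi_\lambda$ running over the $\delbar$-closed conjugate forms. Contracting, $\kappa_1\llcorner\del\varphi=-\sum_{i,\lambda,l}t_{i\lambda}c^k_{il}\,\varphi_l\wedge\bar\varphi_\lambda$ represents a class in $H^1(X,\Omega^1)=H^{1,1}_{\delbar}(X)$, and since some $c^k_{il}\neq 0$ one can choose the parameters $t_{i\lambda}$ so that it is non-zero. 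Hence
$$ o_1([\varphi])=-\,r_{\bar\partial,A}\bigl(\kappa_1\llcorner\del\varphi\bigr). $$

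The crux, and the step I expect to be the main obstacle, is to see that this class is non-trivial in $H^{1,1}_{\rm A}(X)$ and not merely non-zero as a form. Here I would invoke the Nomizu-type theorem for complex parallelisable nilmanifolds, which computes $H^{\bullet,\bullet}_{\rm A}(X)$ from left-invariant forms. At the invariant level $\del$ annihilates every $(0,1)$-form (as $\del\bar\varphi_\lambda=0$) and $\delbar$ annihilates every $(1,0)$-form (as $\delbar\varphi_i=0$), so $(\mathrm{im}\,\del+\mathrm{im}\,\delbar)\cap\Lambda^{1,1}=0$; consequently every non-zero invariant $\del\delbar$-closed $(1,1)$-form has a non-trivial Aeppli class. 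Since $\kappa_1\llcorner\del\varphi$ is invariant, $\delbar$-closed (both $\varphi_l$ and the chosen $\bar\varphi_\lambda$ are $\delbar$-closed), and non-zero, its Aeppli class does not vanish, so $o_1([\varphi])\neq 0$. By Theorem~\ref{main5} this forces $h^{1,1}_{\rm A}(X(t))$ to jump at $t=0$, and because the versal family realises this Kodaira-Spencer direction arbitrarily close to the origin, the jump occurs in every neighbourhood of $0\in B$.
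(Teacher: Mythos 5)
Your argument is correct in substance, but it follows a genuinely different --- and considerably more self-contained --- route than the paper's. The paper's own proof is a two-line reduction: it quotes Proposition 4.2 of \cite{ye} for the existence of a holomorphic $1$-form $[\theta]\in H^0(X,\Omega_X)$ with non-trivial first-order Dolbeault obstruction, observes that $\theta$ also represents a class $[\theta]_{\sB}\in\HH^2(X,\sB_{2,2}^{\bullet})$, and asserts that $o_1([\theta])=o_1([\theta]_{\sB})$, whence Theorem \ref{main5} applies. You instead rebuild the obstructed class from the Lie algebra: $[\frg,\frg]\neq 0$ produces a non-closed invariant holomorphic $1$-form $\varphi$, versality supplies the direction $\theta_i\otimes\bar\varphi_\lambda$, and the obstruction is the explicit invariant form $-\sum t_{i\lambda}c^k_{il}\,\varphi_l\wedge\bar\varphi_\lambda$. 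What your route buys is precisely the point that the paper's ``it is also easy to check that $o_1([\theta])=o_1([\theta]_{\sB})$'' glosses over: the Dolbeault obstruction lives in $H^{1}(X,\Omega^1_X)$ while the Aeppli obstruction lives in $H^{1,1}_{\rm A}(X)$, the two being related by $r_{\bar\partial,A}$, which is not injective in general; your invariant-forms argument ($\del$ kills invariant $(0,1)$-forms and $\delbar$ kills invariant $(1,0)$-forms on a complex parallelisable nilmanifold, so no non-zero invariant $(1,1)$-form is Aeppli-trivial) settles the non-vanishing in $H^{1,1}_{\rm A}$ directly. The cost is two external inputs that you should cite explicitly: Sakane's theorem that the Dolbeault cohomology of a complex parallelisable nilmanifold is computed by invariant forms, and the criterion of \cite{angella-1} upgrading this to Bott-Chern and Aeppli cohomology. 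Two smaller remarks: your spectral-sequence description of all of $\HH^2(X,\sB_{2,2}^{\bullet})$ is more than you need and is only sketched --- the only fact actually used is that $\del\varphi\neq 0$ forces $[\varphi]\neq 0$ in $\HH^2(X,\sB_{2,2}^{\bullet})$, which is immediate because a coboundary there is locally $\del$ of a holomorphic function and hence $\del$-closed; and you should make sure the index $i$ with $c^k_{il}\neq 0$ and the index $\lambda$ with $d\bar\varphi_\lambda=0$ can be chosen simultaneously, which is automatic since the closed $\bar\varphi_\lambda$ always exist ($\frg/[\frg,\frg]\neq 0$ for $\frg$ nilpotent) independently of the choice of $i$.
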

\begin{proof}
From the proof of \cite{ye} propsition 4.2, we know there exists an element
$[\theta]$ in $H^0(X,\Omega_X)$ whose $o_1([\theta]) \neq 0$. It is easy to check that $\theta$
also represents an element in $\HH^2({X,\sB_{2,2}}^{\bullet})$, let us denote it by $[\theta]_{\sB}$ and
it is also easy to check that  $o_1([\theta]) = o_1([\theta]_{\sB})$ in this case. Therefore the number $h_{\rm A}^{1,1}$ will jump in any neighborhood of
$0\in B$.

\end{proof}

\end{document}